\numberwithin{equation}{section}
\newtheorem{thm}{Theorem}[section]
\newtheorem{proposition}[thm]{Proposition}
\newtheorem{corollary}[thm]{Corollary}
\newtheorem{lemma}[thm]{Lemma}
\theoremstyle{definition}
\newtheorem{definition}[thm]{Definition}
\newtheorem*{remark*}{Remark}
\newtheorem{remark}[thm]{Remark}
\newcommand{\End}{\text{End}}
\newcommand{\ind}{\mathrm{Ind}}
\title[On irreducible representations of nilpotent groups]{On monomial representations of finitely generated nilpotent groups}
\date{\today }
\author{E. K. Narayanan}
\address{ Department of Mathematics,
    Indian Institute of Science,
    Bangalore 560012, India }
\email{naru@math.iisc.ernet.in}
\author{Pooja Singla}
\email{pooja@math.iisc.ernet.in}
\subjclass[2010] { Primary: 20C15,
Secondary: 20F18, 22D30}
\keywords{ Monomial representations, Induced
representations, nilpotent groups, Heisenberg groups}
\begin{document}
\maketitle
\begin{abstract}
A result of D. Segal states that every complex irreducible representation of a finitely generated nilpotent group $G$ is monomial if and only if $G$ is abelian-by-finite. A conjecture of A. N. Parshin, recently proved affirmatively by I.V. Beloshapka and S. O. Gorchinskii (2016), characterizes the monomial irreducible representations of finitely generated nilpotent groups.
	This article gives a slightly shorter proof of the conjecture combining the ideas
	of I. D. Brown and P. C. Kutzko.
	We also characterize finite dimensional irreducible representations of two step nilpotent groups and also
provide a full description of the finite dimensional representations of two step
	groups whose center has rank one.
\end{abstract}

\maketitle

\bigskip
\section{Introduction}
It is well known that every finite-dimensional irreducible representation
of a nilpotent group over the field of complex numbers is monomial, that is
induced from a one dimensional representation of some subgroup. Hall~\cite{MR0110750}, proved that every complex irreducible representation of a finitely generated nilpotent group $G$ is finite dimensional if and only if $G$ is abelian-by-finite, i.e., $G$ contains an abelian normal subgroup $N$ such that $G/N$ is finite. Therefore, in general a
finitely generated nilpotent group has infinite-dimensional irreducible
representations. As a first step towards an understanding of representations of finitely generated nilpotent groups, one would like to know whether all irreducible representations are necessarily monomial.  Segal~\cite{MR0444759},  proved that every complex irreducible representation of $G$ is monomial if and only if $G$ is abelian-by-finite. Therefore the question of characterizing monomial irreducible representations of a finitely generated nilpotent groups arises naturally. The characterization that works efficiently comes from the parallel questions for the unitary irreducible representations of nilpotent Lie groups. For the motivation, we recall these results first.

For the case of simply connected nilpotent Lie groups and complex unitary irreducible representations the analogous result appeared in 1962 in the classical work of Kirillov~\cite{Kirillov1962} (see also Dixmier~\cite{Dixmier3, Dixmier4}), where he developed the famous orbit method to construct irreducible representations of simply connected nilpotent Lie groups. It turns out that every unitary irreducible representation of a simply connected
connected nilpotent Lie group is induced from a unitary character of a subgroup.

On the other hand, the complex irreducible unitary representations of finitely generated discrete nilpotent groups are not necessarily monomial. For example, I.D.Brown~\cite{MR0352324} constructed an irreducible unitary representation of discrete Heisenberg group  that is not monomial. Brown~\cite{MR0352324} also proved that a unitary irreducible representations of a discrete nilpotent group is monomial if and only if it has finite weight. Recall that a representation $\rho$ of $G$ is said to have finite weight if there exists a subgroup $H$ of $G$ and a character $\chi$ of $H$ such that space of $H$-linear maps, also called the space of intertwining operators, $\mathrm{Hom}_H(\rho|_{H}, \chi)$ is non-zero and finite dimensional.

Brown's result~\cite{MR0352324} motivates the question whether a similar characterization holds for irreducible algebraic representations of finitely generated discrete nilpotent groups. That is, if $G$ is such a group and $\pi$ is an irreducible algebraic representation of $G,$ then is it true that $\pi$ is equivalent to $\ind_{H}^G(\chi)$ for some character $\chi$ of a subgroup $H$ if and only if $\pi$ has
finite weight.
 A. N. Parshin~\cite{ParshinICM}, during plenary lecture at ICM 2010, conjectured that Brown's characterization of monomial representations holds for all complex irreducible representations of finitely generated nilpotent groups (see also \cite{Arnal-Parshin}). In this case by induction of a representation, we mean finite induction (See Definition~\ref{defn:induced}). This conjecture was proved by Beloshapka and Gorchinksii
(see \cite{Beloshapka-Gorchinskiy}).

 In this article our aim is to present a proof of this conjecture following the ideas of Brown ~\cite{MR0352324}. An earlier version of this article (which was circulated among some) which claimed to have settled Parhin's conjecture contained a gap in the proof. This was pointed out to us by Prof. Parshin (see Proposition~\ref{reduction-to-normalizer}). Beloshapka and Gorchinksii, working on the same problem independently, fixed this gap, thus proving the conjecture in full (see \cite{Beloshapka-Gorchinskiy}). Although, both these proofs were modeled on the proof by Brown~\cite{MR0352324} for unitary representations, we feel that our usage of Kutzko's results (see Theorem~\ref{thm:kutzko})  makes the proof of the main theorem shorter. The results of Sections~\ref{Sec: rank-one} and \ref{Sec: general}, we believe are new and are of independent interest. The following is the main result of this paper. We build on the ideas of this article to prove parallel results for finitely generated supersolvable groups in \cite{supersolvable}.  Unexpectedly, the proofs get much more involved for the supersolvable case as compared to the nilpotent one. It will be interesting to characterize the class of infinite polycyclic groups for which Brown's characterization holds.

\begin{thm}\label{main-theorem}

Let $G$ be a finitely generated nilpotent group. An
irreducible countable dimensional representation $\pi$ of $G$ is monomial if and only if it has
finite weight.
\end{thm}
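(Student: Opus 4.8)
The plan is to prove the two implications separately: the forward direction (monomial $\Rightarrow$ finite weight) is essentially formal, while the converse goes by induction on the Hirsch length of $G$. For the forward direction, suppose $\pi \cong \ind_H^G \chi$ with $\chi$ a one-dimensional character of a subgroup $H$, the induction being the finite induction of Definition~\ref{defn:induced}, which is left adjoint to restriction. Frobenius reciprocity then identifies $\Hom_H(\chi, \pi|_H)$ with $\End_G(\pi)$; since $\pi$ is irreducible of countable dimension over the uncountable field $\bC$, the Dixmier--Quillen form of Schur's lemma gives $\End_G(\pi) = \bC$, so this intertwining space is one-dimensional, hence nonzero and finite-dimensional. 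A short argument, either via the contragredient of $\ind_H^G\chi$ or via a Mackey decomposition of $\pi|_H$ along the double cosets $H\backslash G/H$, transfers this conclusion to the space $\Hom_H(\pi|_H,\chi)$ appearing in the definition of finite weight, so that $(H,\chi)$ witnesses finite weight for $\pi$.

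For the converse I would induct on the Hirsch length $h(G)$. If $h(G)=0$ then $G$ is finite nilpotent, hence an M-group, so every irreducible representation is already monomial and there is nothing to prove. Assume $G$ is infinite, so that $Z = Z(G)$ is infinite, and let $\pi$ act on $Z$ through the central character $\omega$ guaranteed by Schur's lemma. Let $Z_0$ denote the maximal free abelian subgroup of $Z$. If $\omega$ is not injective on $Z_0$, then some nontrivial infinite cyclic central subgroup $K$ lies in $\ker\omega$; as $K$ is central and acts trivially, $\pi$ factors through the finitely generated nilpotent quotient $G/K$, which has strictly smaller Hirsch length. Finite weight descends to this quotient, so the inductive hypothesis makes the inflated representation monomial, and hence $\pi$ is monomial.

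The essential case is that $\omega$ is injective on $Z_0$, where $\pi$ is genuinely infinite-dimensional. Here I would exploit that $G$ has nilpotency class at least two (if abelian, $\pi$ is a character and there is nothing to prove): choose $g$ in the second center $Z_2(G)$ but not in $Z$, so that $[g,x]\in Z$ for all $x$ and $\psi_g(x)=\omega([g,x])$ defines a character of $G$, which is nontrivial precisely because $\omega$ is injective on $Z_0$. Its kernel $N=\ker\psi_g$ is a proper normal subgroup of infinite index, and Clifford theory in the non-unitary algebraic category — supplied by Kutzko's Theorem~\ref{thm:kutzko} — yields $\pi\cong\ind_{G_\sigma}^G\hat\sigma$, where $\sigma$ is an irreducible constituent of $\pi|_N$ and $G_\sigma\subsetneq G$ is its inertia group; one then checks that $\hat\sigma$ is irreducible, inherits finite weight on $G_\sigma$, and sits over a group of strictly smaller complexity, so that the inductive hypothesis renders $\hat\sigma$ monomial and, by transitivity of induction, $\pi$ monomial. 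The main obstacle is exactly this reduction to the inertia subgroup, recorded in Proposition~\ref{reduction-to-normalizer} and the source of the gap in the earlier version: in the algebraic setting one has no unitarity, no orthogonality of characters, and no complete reducibility of restrictions, so the classical Mackey machinery fails verbatim. The resolution is to route all of Clifford theory through Kutzko's intertwining-number estimates, replacing Hilbert-space orthogonality by finite-dimensionality of intertwining spaces, and to verify carefully both that $G_\sigma$ is proper and that the finite-weight data is preserved under the reduction.
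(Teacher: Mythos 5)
Your forward direction (monomial $\Rightarrow$ finite weight) is essentially correct and, if anything, cleaner than the paper's: since finite induction is left adjoint to restriction, $V_H(\chi)\cong \Hom_H(\chi,\pi|_H)\cong \Hom_G(\ind_H^G\chi,\pi)=\End_G(\pi)\cong\bC$ by the Dixmier--Schur lemma for countable-dimensional irreducibles, whereas the paper verifies one-dimensionality of $V_H(\chi)$ by hand, using Kutzko's double-coset description to show an $f\in V_H(\chi)$ must be supported on the trivial coset. Note also that the paper's formal Definition of finite weight is the eigenspace $V_H(\chi)=\{v: \pi(h)v=\chi(h)v\}\cong\Hom_H(\chi,\pi|_H)$, so your extra ``transfer'' step to $\Hom_H(\pi|_H,\chi)$ is not actually needed.

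The converse, however, has a genuine gap at its central step, and it is not a repairable technicality. In your essential case you take $N=\ker\psi_g$, a normal subgroup typically of infinite index, posit an irreducible constituent $\sigma$ of $\pi|_N$, and invoke a Clifford-theoretic reduction $\pi\cong\ind_{G_\sigma}^G\hat\sigma$. In the algebraic category $\pi|_N$ need not be semisimple and need not contain \emph{any} irreducible $N$-submodule, and Kutzko's Theorem~\ref{thm:kutzko} cannot supply one: it computes $\End_G(\ind_H^G\chi)$ as a sum of spaces $I(\chi,\chi^g)$ over double cosets satisfying a finiteness condition, and says nothing about decomposing the restriction of a given irreducible $\pi$ to a normal subgroup. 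Worse, your essential case uses the finite-weight hypothesis only peripherally (``$\hat\sigma$ inherits finite weight''), but Brown's non-monomial irreducible representation of the discrete Heisenberg group has central character injective on the center, so an unconditional reduction of the kind you describe must fail for it; finite weight has to drive the construction from the start, not be checked at the end. (Secondarily, if $G_\sigma$ has finite index in $G$ then its Hirsch length equals that of $G$, so your induction measure does not decrease.) The paper avoids restriction-decomposition entirely: starting from the finite-weight pair $(H,\chi)$ it \emph{enlarges} the pair --- first maximizing the torsion-free rank of $H$ via Lemma~\ref{finite-finite.index} and the extension Lemma~\ref{diamond-lemma}, then extending stepwise inside $\sqrt{H}$ by cyclic pieces until the pair $(H_k,\chi_k)$ is self-stabilizing --- so that Proposition~\ref{reduction-to-normalizer} gives $\End_G(\ind_{H_k}^G\chi_k)\cong\bC$, Theorem~\ref{thm:belo-gorch} upgrades Schur irreducibility to genuine irreducibility (this upgrade, not an inertia reduction, was the gap in the earlier draft you allude to), and Lemma~\ref{non-zero-intertwiner} with Proposition~\ref{reduction} then identifies $\pi$ with the induced representation.
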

We describe the strategy of the proof here.
First of all we obtain an important sufficient condition for a subgroup $H$ of a nilpotent group $G$ and its character $\chi$ such that $\mathrm{End}_G(\ind_H^G(\chi)) \cong \mathbb C$.
For this we use a result of Kutzko~\cite{MR0442145} and a few ideas of Brown~\cite{MR0352324}.
Then we show that there exists a subgroup $H'$ and its character $\chi'$ such that $\mathrm{End}_G(\ind_{H'}^G(\chi')) \cong \mathbb C$ and $V_{H'}(\chi')$ is non-trivial. In the proof we see that finite weight condition is used crucially.
Next, we establish a non-trivial $G-$linear map between $\pi$ (given finite weight representation) and $\ind_{H'}^{G}(\chi')$. Then we indicate the proof of the fact that  $\ind_{H'}^{G}(\chi')$ is irreducible for above obtained $H'$ and $\chi'$. Combining this all together, we obtain $\pi \cong \ind_{H'}^{G}(\chi')$ and therefore $\pi$ is monomial.

Our next results are regarding the description of finite dimensional representations of the finitely generated two step nilpotent groups. First of all, we provide a full description of the finite dimensional representations of two step
 	groups whose center has rank one.

Define $\mathbb H(s_1, s_2,
 \ldots, s_n) = \{(u,v,z) \mid z \in \mathbb Z, u,v \in \mathbb Z^n
 \}$ where the group operation is defined by,

 \begin{eqnarray}
     (u_1, u_2, \ldots, u_n, v_1, v_2, \ldots, v_n, z)(u'_1, u'_2, \ldots, u'_n, v'_1,
     v'_2, \ldots, v'_n, z') =  \nonumber\\
     (u_1 + u'_1, \ldots, v_n + v'_n, z+ z' + \sum_{i=1}^n s_i u_i v'_i) \nonumber
 \end{eqnarray}

It is well known that a two step nilpotent group with rank one center is of the form $\mathbb H(s_1, s_2,
 \ldots, s_n)$.

 \begin{thm}
 	\label{rank-one}
 	Let $G = \mathbb H(s_1, s_2,\ldots, s_n)$ and $(\rho, V)$ be an
 	irreducible monomial representation of $G$ with respect to $(H(V), \chi)$. Then the
 	following are equivalent.
 	
 	\begin{enumerate}
 		
 		\item $\chi|_{[G,G]}$ is of order $N_1 < \infty .$
 		
 		\item $V$ is finite dimensional. Further for all $1 \leq j \leq
 		n$, let $N_j$ be the least positive integer such that $\frac{N_j
 			s_j}{s_1}$ is a multiple of $N_1$. Then $\mathrm{dim}(V) = N_1 N_2
 		\ldots N_n$.
 		
 	\end{enumerate}
 	
 \end{thm}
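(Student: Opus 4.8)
The plan is to work with the commutator pairing and the central character, reducing the statement to the representation theory of a finite Heisenberg group. Since the classification places the structure constants in the divisibility normal form $s_1 \mid s_2 \mid \cdots \mid s_n$, and $[(e_i,0,0),(0,e_i,0)] = (0,0,s_i)$ (writing $e_i\in\mathbb{Z}^n$ for the $i$-th standard basis vector), the commutator subgroup is exactly the rank-one subgroup $[G,G]=\langle(0,0,s_1)\rangle$ of the center $Z=\{(0,0,z)\}\cong\mathbb{Z}$, because $\gcd(s_1,\dots,s_n)=s_1$. As $\rho=\ind_{H(V)}^{G}\chi$ is irreducible of countable dimension over $\mathbb{C}$, Schur's lemma gives $\End_G(\rho)=\mathbb{C}$, so $Z$ acts by a central character $\omega(0,0,z)=t^{z}$; since $[G,G]\subseteq H(V)$ is central, $\rho|_{[G,G]}=\chi|_{[G,G]}\cdot\mathrm{id}$, whence the order $N_1$ of $\chi|_{[G,G]}$ is the multiplicative order of $t^{s_1}$. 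Throughout I use the commutator pairing $B((u,v),(u',v'))=\sum_i s_i(u_iv_i'-u_i'v_i)$ on $\mathbb{Z}^{2n}\cong G/Z$, which satisfies $\rho([g,g'])=t^{B(\bar g,\bar g')}\,\mathrm{id}$ and hence $\rho(h)\rho(g)\rho(h)^{-1}=t^{B(\bar h,\bar g)}\rho(g)$ on cosets $\bar g,\bar h$.

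The implication $(2)\Rightarrow(1)$ is immediate: if $\dim V<\infty$, then $\rho(0,0,s_1)=\rho([(e_1,0,0),(0,e_1,0)])$ is simultaneously the scalar $t^{s_1}\,\mathrm{id}_V$ and a commutator in $\mathrm{GL}(V)$, hence of determinant one, so $(t^{s_1})^{\dim V}=1$ and $\chi|_{[G,G]}$ has finite order.

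For $(1)\Rightarrow(2)$ I would proceed as follows. First, $t^{s_1}$ being a root of unity forces $t$ itself to have finite order, and since $s_1\mid s_j$ each $t^{s_j}=(t^{s_1})^{s_j/s_1}$ is a root of unity of order $M_j:=N_1/\gcd(N_1,s_j/s_1)$. Next I compute the radical $R=\{(u,v): t^{B((u,v),(u',v'))}=1 \text{ for all }(u',v')\}$ of the pairing; evaluating against $(e_j,0,0)$ and $(0,e_j,0)$ gives $R=\{(u,v): M_j\mid u_j \text{ and } M_j\mid v_j \text{ for all } j\}$, so its preimage $G_0\subseteq G$ has finite index $[G:G_0]=\prod_j M_j^2$. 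For $g\in G_0$ the operator $\rho(g)$ commutes with all of $\rho(G)$, hence lies in $\End_G(\rho)=\mathbb{C}$, i.e.\ $G_0$ acts by scalars. Consequently the linear span of $\rho(G)$ inside $\End(V)$ has dimension at most $[G:G_0]<\infty$, and a simple module over a finite-dimensional algebra is finite-dimensional, so $V$ is finite-dimensional.

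Finally, to pin down $\dim V$ I would invoke the finite Stone--von Neumann mechanism. Since $R$ is exactly the radical, $t^{B}$ descends to a nondegenerate alternating bicharacter on the finite group $G/G_0\cong\prod_j(\mathbb{Z}/M_j)^2$. The relation $\rho(h)\rho(g)\rho(h)^{-1}=t^{B(\bar h,\bar g)}\rho(g)$, combined with the linear independence of the characters $\bar h\mapsto t^{B(\bar h,\bar g)}$ (guaranteed by nondegeneracy), shows that coset representatives of $G/G_0$ map to linearly independent operators; hence $\mathrm{span}\,\rho(G)=\End(V)$ has dimension exactly $[G:G_0]=\prod_j M_j^2$, and comparing with $\dim\End(V)=(\dim V)^2$ yields $\dim V=\prod_j M_j$. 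The identity $M_j=N_1/\gcd(N_1,s_j/s_1)=N_j$, obtained by clearing $s_1$ from the defining relation $N_1s_1\mid N_js_j$, then gives $\dim V=N_1N_2\cdots N_n$. I expect the main obstacle to be precisely this last dimension count: finite-dimensionality is soft, but extracting the exact value requires the nondegeneracy/linear-independence step (equivalently, exhibiting an explicit maximal isotropic polarization $H(V)$ and computing $[G:H(V)]$), and one must verify that the answer depends only on $\chi|_{[G,G]}$ rather than on the full central character, which is exactly where the normal-form hypothesis $s_1\mid s_j$ is used.
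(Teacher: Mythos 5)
Your $(1)\Rightarrow(2)$ argument is correct but takes a genuinely different route from the paper. The paper passes to $G'=G/K$ with $K=\ker\chi|_{[G,G]}$, exhibits the explicit abelian normal subgroup $A'=\{(u,v,z)\mid N_i \mid u_i \text{ for all } i\}/K$ of index $N_1N_2\cdots N_n$, extends the central character to a character $\delta$ of $A'$ occurring in $\rho|_{A'}$, proves $\ind_{A'}^{G'}(\delta)$ irreducible by checking $\delta^g\neq\delta$ for nontrivial cosets (via Proposition~\ref{reduction-to-normalizer}), and identifies $\rho\cong\ind_{A'}^{G'}(\delta)$ by Proposition~\ref{reduction}; the dimension is then the index of the polarization. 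You avoid constructing a polarization altogether: the radical preimage $G_0$ acts by scalars (Dixmier--Schur, legitimate since $\ind_{H(V)}^G\chi$ is countable dimensional for countable $G$), the grading of $\mathrm{span}\,\rho(G)$ by the distinct characters $\bar h\mapsto t^{B(\bar h,\bar g)}$ of $G/G_0$ gives $\dim\mathrm{span}\,\rho(G)=[G:G_0]=\prod_j M_j^2$ exactly, and Burnside closes with $(\dim V)^2=\prod_j M_j^2$. What this buys: your direction $(1)\Rightarrow(2)$ never uses the monomial hypothesis (any irreducible countable-dimensional representation works), and you get the exact dimension without proving irreducibility of any induced representation; what the paper's route buys is an explicit model $\rho\cong\ind_{A'}^{G'}(\delta)$ of the representation, not just its dimension. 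Your determinant trick for finiteness in $(2)\Rightarrow(1)$ is also cleaner than the paper's argument via monomial matrices ($\rho(g)^m$ diagonal and $[a^m,b^m]=[a,b]^{m^2}$). Both you and the paper tacitly use the divisibility normal form $s_1\mid s_j$ (the paper needs it for $[G,G]=\langle(0,0,s_1)\rangle$ and for $N_js_j/s_1$ to be an integer), so making it explicit is fine.

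There is, however, one genuine gap: your $(2)\Rightarrow(1)$ proves only that $\chi|_{[G,G]}$ has \emph{finite} order, whereas statement (1) asserts the order is exactly the $N_1$ from which the $N_j$ in (2) are defined. If the order is some $d_1$, your own $(1)\Rightarrow(2)$ gives $\dim V=\prod_j d_1/\gcd(d_1,s_j/s_1)$, and one must still deduce $d_1=N_1$ from $\prod_j d_1/\gcd(d_1,s_j/s_1)=\prod_j N_1/\gcd(N_1,s_j/s_1)$; the paper devotes its entire final paragraph to this (comparing prime factorizations, using $N_j\mid N_1$ and $d_j\mid d_1$). Your closed form makes the repair short and you should include it: for each prime $p$ one has $v_p\bigl(\prod_j N/\gcd(N,s_j/s_1)\bigr)=\sum_j \max\bigl(0,\,v_p(N)-v_p(s_j/s_1)\bigr)$, which is strictly increasing in $v_p(N)$ because the $j=1$ summand is $v_p(N)$ itself and the others are nondecreasing; hence $N\mapsto\prod_j N/\gcd(N,s_j/s_1)$ is injective and $d_1=N_1$ follows. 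Without some such step the stated equivalence, as opposed to the weaker ``finite order iff finite dimensional,'' is not proved.
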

This is a result analogous to Theorem 4 in \cite{MR3201567}.  \\

In the end, we look at finite dimensional representations of
finitely generated two step nilpotent groups more closely and prove the following.
\begin{thm}
	\label{general-case}
	Let $G$ be a finitely generated two step nilpotent group and $\rho$ be an irreducible representation of $G$. Then $\rho$ is finite dimensional if and only if character $\chi$ obtained by restricting $\rho$ to $[G, G]$ has
	finite order. Further, in this case the following hold.
	\begin{enumerate}
		\item order of $\chi$ divides $dim(\rho)$.
		\item $dim(\rho) = \sqrt{|G_\rho / Z(G_\rho)|}$ where $G_\rho = G/Ker (\rho)$.
		
	\end{enumerate}
\end{thm}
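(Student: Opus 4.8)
The plan is to prove the equivalence by handling the two implications separately, and to obtain the two numerical assertions as byproducts of the structure theory forced by the reverse direction. First I would record the basic setup: since $G$ is two step nilpotent, $[G,G]$ lies in $Z(G)$, so by Schur's lemma in the countable dimensional setting (where $\End_G(\rho)=\mathbb C$) every central element, in particular every element of $[G,G]$, acts on $(\rho,V)$ by a scalar; this is exactly the character $\chi$. For the forward implication I would assume $d=\dim\rho<\infty$. For a commutator $[a,b]$ one has $\rho([a,b])=\chi([a,b])\,\mathrm{Id}_V$, and taking determinants gives $\chi([a,b])^{d}=\det\rho([a,b])=1$. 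Since commutators generate $[G,G]$ and $\chi$ is a homomorphism, $\chi^{d}$ is trivial, so $\chi$ has finite order dividing $d$. This settles one implication and simultaneously yields assertion (1).

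For the converse I would assume $\chi$ has finite order $N$ and pass to the faithful quotient $G_\rho=G/\mathrm{Ker}(\rho)$. Because $\rho$ is faithful on $G_\rho$ and $\rho|_{[G,G]}=\chi$, the subgroup $[G_\rho,G_\rho]\cong\chi([G,G])$ is cyclic of order $N$, hence finite. I would then set $A=G_\rho/Z(G_\rho)$, which is finitely generated and abelian (the latter because $G_\rho$ is two step). The commutator descends to a well defined alternating bilinear form $B\colon A\times A\to[G_\rho,G_\rho]$, $B(\bar a,\bar b)=[a,b]$, non-degenerate precisely by the definition of the center. The decisive observation is that bilinearity forces $B(N\bar a,\bar b)=B(\bar a,\bar b)^{N}=1$ for all $\bar b$, so $N\bar a$ lies in the radical of $B$ and therefore vanishes; thus $A$ has exponent dividing $N$, and being finitely generated abelian it is finite. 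Consequently $Z(G_\rho)$ is an abelian normal subgroup of finite index, so $G_\rho$ is abelian-by-finite, and by Hall's theorem quoted in the introduction all its irreducible representations are finite dimensional; in particular $\rho$ is.

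To obtain assertion (2) I would write $Z=Z(G_\rho)$ and $m=|A|=|G_\rho/Z|$. The center acts through a scalar character $\omega$ extending $\chi$, and faithfulness of $\rho$ forces $\omega$ to be injective on $[G_\rho,G_\rho]$; hence the pairing $\langle\bar a,\bar b\rangle=\omega([a,b])$ is a non-degenerate alternating $\mathbb C^{\times}$-valued form on $A$ (if $\omega([a,b])=1$ for every $b$, then $[a,b]=1$ by faithfulness, so $\bar a=0$). Viewing $\rho$ as a module over the twisted group algebra $\mathbb C^{\alpha}[A]$ attached to this central extension and $\omega$, non-degeneracy makes that algebra simple, i.e. a full matrix algebra; this forces $m$ to be a perfect square and the unique irreducible module to have dimension $\sqrt m$. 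Hence $\dim\rho=\sqrt{|G_\rho/Z(G_\rho)|}$. This is the finite analogue of the Stone--von Neumann theorem and runs parallel to the computation behind Theorem~\ref{rank-one}.

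The main obstacle I anticipate is the finiteness of $A=G_\rho/Z(G_\rho)$ in the reverse direction: the point is to leverage that the commutator form takes values in a \emph{finite} group in order to bound the exponent of a finitely generated abelian group. Once that is secured, the remaining step --- identifying $\rho$ with the unique irreducible representation of a finite Heisenberg type group and reading off its dimension --- is the standard non-degenerate twisted group algebra argument, and I expect no further serious difficulty there.
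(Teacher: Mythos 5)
Your proof is correct, but it departs from the paper's argument at three substantive points, and in each case your route is more self-contained. First, for the implication ``finite dimensional $\Rightarrow$ finite order'' together with assertion (1), you use the determinant trick: $\det\rho([a,b])=1$ for any commutator of invertible matrices, so $\chi([a,b])^{\dim\rho}=1$, and multiplicativity of $\chi$ gives $\chi^{\dim\rho}=1$. The paper instead exploits monomiality twice: it bounds the order of $\chi$ by $m^2$ using that $\rho(g)^m$ is diagonal for a suitable $m$ together with the two-step identity $[a^m,b^m]=[a,b]^{m^2}$, and it proves the divisibility statement (1) separately at the very end, from $x^{p_1\cdots p_k}\in Z(G_\rho)$ and $\chi[x,y]^{p_1\cdots p_k}=\chi[x^{p_1\cdots p_k},y]=1$. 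Your single determinant computation subsumes both and does not presuppose that $\rho$ is monomial. Second, your finiteness argument for $A=G_\rho/Z(G_\rho)$ (bilinearity of the commutator form valued in the finite group $[G_\rho,G_\rho]$, hence bounded exponent, hence finite for a finitely generated abelian group) is exactly the paper's Lemma~\ref{finite}; but your subsequent appeal to Hall's abelian-by-finite criterion to conclude finite dimensionality of $\rho$ makes explicit a step the paper compresses into ``we may proceed as above.'' Third, for the dimension formula (2) the paper constructs a polarization: a maximal subgroup $H_\rho\supset Z(G_\rho)$ isotropic for the commutator pairing, extends $\chi$ to $\widetilde\chi$, verifies the stabilizer condition, and invokes Propositions~\ref{reduction-to-normalizer} and~\ref{reduction} plus Lemma~\ref{structure} (the $\mathcal G\cong A\times\widehat A$ decomposition) to identify $\rho\cong\ind_{H_\rho}^{G_\rho}(\widetilde\chi)$ of dimension $[G_\rho:H_\rho]=\sqrt{|G_\rho/Z(G_\rho)|}$. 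You instead realize $\rho$ as a module over the twisted group algebra $\mathbb C^{\alpha}[A]$ and use the finite Stone--von Neumann fact that non-degeneracy of the commutator pairing makes this algebra a full matrix algebra of size $\sqrt{|A|}$; this buys you the perfect-square claim and the dimension in one stroke and bypasses the explicit polarization, at the cost of citing the simplicity criterion for twisted group algebras without proof---which is a fair trade, since it sits at the same level as the paper's citation of Lemma~\ref{structure} from the literature. One point both treatments must (and you correctly do) flag: before finite dimensionality is established, the very existence of the character $\chi=\rho|_{[G,G]}$ rests on Schur's lemma for countable dimensional irreducible representations, so that central elements act by scalars.
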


\section{Preliminaries}
In this section we recall basic definitions, fix notation and prove few important results that are
required to prove Theorem~\ref{main-theorem}. \\

Let $G$ be a group. An algebraic representation of $G$ is a
homomorphism, $\pi: G \rightarrow \mathrm{Aut}(V)$, from group $G$
to the set of automorphisms of a complex vector space $V$. We remark that $V$ can possibly be infinite dimensional. We
denote this by $(\pi, V)$. We also denote this just by either
$\pi$ or $V$ whenever our meaning is clear from the context. If
$V$ is a finite dimensional vector space, we say $\pi$ is a finite
dimensional representation of $G$ and its dimension is equal to
that of $V$. The induced representation is defined as follows.

\begin{definition}(Induced representation)
\label{defn:induced}
Let $H$ be a subgroup of a group $G$ and $(\rho, W)$ be a
representation of $H$. The induced representation
$(\widetilde{\rho}, \widetilde{W})$ of $\rho$ from $H$ to $G$ has
representation space $\widetilde{W}$ consisting of functions $f: G
\rightarrow \End(W)$ satisfying the following:
\begin{enumerate}
    \item $f(hg) = \rho(h) f(g)$ for all $g \in G$, $h \in H$.
    \item Support of $f$ is contained in a set of finite number of
    right cosets of $H$ in $G$.
\end{enumerate}
Then the homomorphism $\widetilde{\rho}: G \rightarrow
\mathrm{Aut}(\widetilde{W})$ is given by $\widetilde{\rho}(g)f(x)
= f(xg)$ for all $x, g \in G$. We denote this induced
representation by $\ind_H^G(\rho)$.
\end{definition}
In the language of group algebras, induced representation $\widetilde{W}$ of $G$, satisfies $\tilde{W} \cong \mathbb C[G] \otimes_{\mathbb C[H]} W$ with the action of $G$ given by multiplication on the left. We remark that such representations have been already studied in literature (see for example \cite{passman-infinite-group-rings}).

\begin{definition}(Finite weight representation)
	A representation $(\pi, V)$ of a group $G$ is said to have {\bf
		finite weight} if there is a subgroup $H$ and a character $\chi: H
	\rightarrow \mathbb C^{\times}$ such that the space $V_H(\chi)$
	defined by,
	\[
	V_H(\chi) = \{v \in V \mid \pi(h) v = \chi(h) v\,\, \mathrm{ for \,\, all}\,\,  h \in H \},
	\]
	is finite dimensional.
\end{definition}

Let $G$ be a finitely generated group and $H$
be a subgroup of $G$. Let $(\chi, W)$ be a one dimensional representation of $H$. Let
$\chi^{g}$ be the conjugate of $\chi$ acting on $g^{-1} H g$.
By $I(\chi, \chi^{g})$, we mean the
space of $H \cap g^{-1}Hg$-linear maps from $(\chi, W)$ to
$(\chi^{g}, W)$ when both of these are viewed as
representations of the group $H \cap g^{-1}Hg$. The following result regarding the space of intertwining operators of induced representations is a slight generalization of a result that was proved by Kutzko~\cite{MR0442145}. Its unitary analogue was proved by
Mackey~\cite{MR0042420} himself in early 50's.

\begin{thm}\label{thm:kutzko} Let $H$ be a subgroup of a discrete group $G$ and let $\chi$ be  character of $H$. Let $\Lambda$ be the set of $(H,H)$-double coset representatives $g \in G$ such that $HgH$ is union of finite number of right cosets of $H$ in $G$. Then the following are true.
	\begin{enumerate}
		
		\item  The space $\mathrm{End}_G(\ind_H^G(\chi))$
		is isomorphic to the set of functions $s : G \rightarrow \mathrm{End}_{\mathbb C}(W)$ such that $s(h_1gh_2) = \chi(h_1) s(g) \chi(h_2)$ for all $h_1, h_2 \in H$, $g
		\in G$ and the support of $s$ is contained in $\Lambda$.
	
		\item  $\mathrm{End}_G(\ind_H^G(\chi)) \cong \oplus_{g \in \Lambda } I(\chi, \chi^{g})$
		
	\end{enumerate}
	
\end{thm}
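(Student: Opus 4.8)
The plan is to prove Theorem~\ref{thm:kutzko} by explicitly identifying the endomorphism algebra of the induced representation with a space of "intertwining functions" on $G$, and then decomposing this space along double cosets. The key is that an element of $\mathrm{End}_G(\ind_H^G(\chi))$ is a $G$-equivariant operator on $\widetilde{W}$, and because the induced space consists of functions on $G$ transforming on the left under $H$ via $\chi$, such operators can be encoded by a convolution-type kernel that is bi-$\chi$-equivariant under $H$.

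First I would set up the correspondence for part (1). Given $T \in \mathrm{End}_G(\ind_H^G(\chi))$, I would produce a function $s: G \to \mathrm{End}_{\mathbb C}(W)$ by evaluating $T$ against a suitable "delta-type" generator of $\widetilde{W}$ supported on a single coset $H$, and reading off the values of the resulting function across cosets. Concretely, since $\widetilde{W} \cong \mathbb C[G] \otimes_{\mathbb C[H]} W$, an element of $\mathrm{Hom}_G(\widetilde{W}, \widetilde{W})$ is determined by where it sends the generator $1 \otimes w$, and the resulting kernel $s$ must satisfy the bi-equivariance $s(h_1 g h_2) = \chi(h_1) s(g) \chi(h_2)$ forced by the left $H$-transformation law of $\widetilde{W}$ and the $G$-equivariance of $T$. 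The finite-support condition on functions in $\widetilde{W}$ (finitely many right cosets) translates into the requirement that $s$ be supported on those double cosets $HgH$ that are finite unions of right cosets, i.e.\ on $\Lambda$; this is exactly where the finiteness hypothesis enters, guaranteeing that the convolution $T f$ lands back in $\widetilde{W}$ and that the sum defining $Tf$ converges (is finite). I would then check that this assignment $T \mapsto s$ is a bijection of vector spaces, with inverse given by convolution against the kernel.

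Next, for part (2), I would decompose the space of such kernels $s$ according to the double cosets on which they are supported. Since $s$ is bi-$\chi$-equivariant under $H$, its restriction to a single double coset $HgH$ is determined by the single value $s(g) \in \mathrm{End}_{\mathbb C}(W)$, subject to the compatibility constraint coming from the overlap of the left and right actions: for $h \in H \cap g^{-1}Hg$ we need $s(g) = s(g (g^{-1}hg)) = \chi(ghg^{-1}) s(g) = \chi^{g^{-1}}$-type relation, which says precisely that $s(g)$ is an intertwiner between $\chi$ and $\chi^g$ as representations of $H \cap g^{-1}Hg$. Thus the data on each double coset is exactly an element of $I(\chi, \chi^g)$, and summing over the double coset representatives $g \in \Lambda$ gives the direct sum decomposition $\mathrm{End}_G(\ind_H^G(\chi)) \cong \bigoplus_{g \in \Lambda} I(\chi, \chi^g)$.

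The main obstacle I expect is handling the convergence and well-definedness issues carefully in the infinite-dimensional, infinite-index setting, since $G$ is only finitely generated and the relevant cosets may be infinite in number. Specifically, one must verify that the kernel $s$ attached to a genuine endomorphism is automatically supported on $\Lambda$ (so that $Tf$ has finite support whenever $f$ does), and conversely that convolution against any $\Lambda$-supported bi-equivariant kernel preserves the finite-support condition defining $\widetilde{W}$. Since $\chi$ is one-dimensional here, $\mathrm{End}_{\mathbb C}(W) \cong \mathbb C$ and the equivariance condition $s(g) \chi^g(h) = \chi(h) s(g)$ collapses to the scalar condition that $\chi$ and $\chi^g$ agree on $H \cap g^{-1}Hg$, so $I(\chi, \chi^g)$ is either $0$ or one-dimensional; this simplification makes the bookkeeping manageable, and the theorem reduces essentially to Kutzko's argument with the bi-equivariant kernel formalism supplying the explicit isomorphism.
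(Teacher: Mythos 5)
Your proposal is correct and follows essentially the same route as the paper's proof (itself an outline of Kutzko's argument): you encode $\psi \in \mathrm{End}_G(\ind_H^G(\chi))$ by a kernel obtained from evaluating against the delta-type functions $f^w$ supported on $H$, derive the bi-$\chi$-equivariance and the support condition on $\Lambda$ from the finite-coset-support requirement in $\widetilde{W}$, and then identify the component on each double coset $HgH$ with $I(\chi,\chi^g)$ via $s \mapsto s(g)$, exactly as in the paper. Your write-up in fact supplies slightly more detail than the paper's sketch (the convolution inverse and the verification that kernels attached to genuine endomorphisms land on finitely many double cosets of $\Lambda$, which is what makes the direct sum in part (2) correct), but it is the same argument.
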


\begin{proof} The proof of this follows from Kutzko~\cite[p.2]{MR0442145}. Here we outline ideas of the proof.
	For any $w \in W$, define the set of functions $f^w:G \rightarrow
	W$ by
	\[
	f^w(x) = \begin{cases} \chi(x)w & \text{for} \,\, x \in H \\ 0  & \text{otherwise}. \end{cases}
	\]
	Then
	$f^w \in \ind_H^G(\chi)$. For $\psi \in  \mathrm{End}_G(\ind_H^G(\chi))$, define $ s_\psi: G \rightarrow \mathrm{End}(W)$ by  $s_\psi(g)(w) =
	\psi f^w(g).$
	
	Then $\Psi: \psi \mapsto s_\psi$ gives the required isomorphism
	between $\mathrm{End}_G(\ind_H^G(\chi))$ and $\Delta$. As for the second part, for each $g \in H \backslash G /H $, let $\Delta_{g}$ consists of
	functions in $\Delta$ that have their support on $HgH$. Then
	$\Delta \cong \oplus_{g \in \Lambda} \Delta_{g}$.
	
	For all $g \in G \backslash H /G$, we have
	$\Delta_{g}$ is isomorphic to  $I(\chi, \chi^{g})$ with isomorphism given by $s \mapsto
	s(g)$.
\end{proof}
\begin{remark}
\label{rk:general-intertwiner} In general, when  $\rho$ is any representation of a subgroup $H$ of $G,$ the following result is true by {\it Mackey's formula} (see \cite[Chapter~1, Section~5.5, 5.7]{Vigneras96}, also see \cite{Beloshapka-Gorchinskiy}).
\begin{eqnarray}
\label{eq:mackey-formula}
\ind_H^G(\rho)|_{H} \cong \oplus_{g \in H \backslash G /H } \ind_{H \cap H^g}^H(\rho^g|_{H \cap H^g}).
\end{eqnarray}
This gives the following description of $G$-linear maps of $\ind_H^G(\rho)$.
\begin{eqnarray}
\label{frob-reci}
\mathrm{End}_G(\ind_H^G(\rho)) \cong \oplus_{g \in H \backslash G/  H} \mathrm{Hom}_H(\rho, \ind_{H^g \cap H}^H(\rho^g|_{H^g \cap H})  ).
\end{eqnarray}
Further in case the index of $H$ in $G$ is finite and $\pi$ is an arbitrary representation of $G$, then the following vector spaces are isomorphic (see \cite[Chapter~1, Section~5.4]{Vigneras96}).
\begin{eqnarray}
\label{frobenius-reciprocity}
\mathrm{Hom}_G(\pi, \ind_H^G(\rho)) \cong \mathrm{Hom}_H(\pi|_H, \rho).
\end{eqnarray}

\end{remark}

If the group $G$ is a finitely generated nilpotent group the above proposition can be simplified to give an important condition on $H$ and $\chi:H
\rightarrow \mathbb C^{\times}$ such that $\mathrm{ind}_H^G(\chi)$
is irreducible. Recall that $G$ is called nilpotent if every homomorphic image of $G$ has a cyclic normal subgroup.

\begin{definition}(Radical) A subgroup $K$ of $G$ is called {\bf radical} if
$x^n \in K$ for some $n \in \mathbb N$ implies that $x \in K$ (in
\cite{MR0352324}, Brown calls $K$ isolated).
\end{definition}
Intersection of two
radical subgroups of $G$ is radical. Therefore given a subgroup
$H$ of $G$ smallest radical subgroup of $G$ containing $H$ makes
sense, we shall call this radical of $H$ and denote this by
$\sqrt{H}$. The following lemma lists useful properties of radical
of a subgroup and their normalizers in finitely generated nilpotent groups.

\begin{lemma}\label{properties-of-radical} Let $H$ and $K$ be subgroups of a finitely generated nilpotent group $G$  such that $K \subseteq H$. Then the following are true.

\begin{enumerate}

\item If some odd power of each element of a set of generators of $H$ lies in $K$ then $K$ has finite index in $H$ and $\sqrt{H} = \sqrt{K}$.

\item $\sqrt{(\sqrt{H})} = \sqrt{H}$.
\item $\sqrt{N_G(H)} = N_G(\sqrt{H})$.

\end{enumerate}

\end{lemma}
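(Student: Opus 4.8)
My plan is to reduce all three assertions to a few structural facts about a finitely generated nilpotent group $G$, each classical and provable by commutator collection: for any subgroup $L\subseteq G$ the root set $R(L)=\{x\in G:\ x^{m}\in L\text{ for some }m\geq 1\}$ is a subgroup, is radical, and hence equals $\sqrt{L}$; moreover $[\sqrt{L}:L]<\infty$; and since $t^{\,\mathrm{ord}(t)}=1\in L$ for every torsion element $t$, every radical subgroup contains the (finite) torsion subgroup $\tau$ of $G$. The odd-power hypothesis in (1) is exactly what one needs to run the self-contained, Brown-style verification of these facts via the Hall--Petrescu collection formula: keeping all exponents odd lets the inductive bookkeeping close and makes the power map $x\mapsto x^{m}$ invertible on the $2$-primary part of $\tau$, where squaring would otherwise lose information. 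Statement (2) is then immediate: $\sqrt{H}$ is an intersection of radical subgroups, hence radical, and the smallest radical subgroup containing an already radical subgroup is itself, so $\sqrt{\sqrt{H}}=\sqrt{H}$.

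For (1) I would treat the two conclusions separately. The equality $\sqrt{H}=\sqrt{K}$ is formal and needs no collection: $\sqrt{K}$ is radical and contains $K$, so from $h_{i}^{m_{i}}\in K\subseteq\sqrt{K}$ radicality gives $h_{i}\in\sqrt{K}$ for every generator $h_{i}$ of $H$; hence $H\subseteq\sqrt{K}$ and, by (2), $\sqrt{H}\subseteq\sqrt{\sqrt{K}}=\sqrt{K}$, while $K\subseteq H$ gives the reverse inclusion. For $[H:K]<\infty$ I would pass to $\bar{G}=G/\tau$, which is torsion free. There $\bar{h}_{i}^{m_{i}}\in\bar{K}$ forces $\bar{h}_{i}\in R(\bar{K})=\sqrt{\bar{K}}$, so $\bar{H}\subseteq\sqrt{\bar{K}}$ and $[\bar{H}:\bar{K}]\leq[\sqrt{\bar{K}}:\bar{K}]<\infty$; since $\tau$ is finite, pulling back through $G/\tau$ yields $[H:K]=[\bar{H}:\bar{K}]\,[H\cap\tau:K\cap\tau]<\infty$.

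For (3) I would prove the two inclusions. First, the isolator is natural under automorphisms: conjugation carries a radical subgroup to a radical subgroup, and $g\sqrt{H}g^{-1}$ is the smallest radical subgroup containing $gHg^{-1}$, so $g\sqrt{H}g^{-1}=\sqrt{gHg^{-1}}$; thus $g\in N_G(H)$ forces $g\sqrt{H}g^{-1}=\sqrt{H}$, i.e. $N_G(H)\subseteq N_G(\sqrt{H})$. Granting that $N_G(\sqrt{H})$ is radical (the crux, below), it then contains $\sqrt{N_G(H)}$, giving one inclusion. For the reverse inclusion take $g\in N_G(\sqrt{H})$; then every conjugate $g^{i}Hg^{-i}$ is a subgroup of $\sqrt{H}$ of the fixed finite index $[\sqrt{H}:H]$, and as $\sqrt{H}$ is finitely generated it has only finitely many subgroups of that index, so $g^{k}Hg^{-k}=H$ for some $k\geq 1$. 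Hence $g^{k}\in N_G(H)$ and $g\in\sqrt{N_G(H)}$, which is the inclusion $N_G(\sqrt{H})\subseteq\sqrt{N_G(H)}$.

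The step I expect to be the main obstacle is showing that $N_G(\sqrt{H})$ is radical, i.e. that $g^{n}\sqrt{H}g^{-n}=\sqrt{H}$ forces $g\sqrt{H}g^{-1}=\sqrt{H}$. Modulo $\tau$ one reduces to $G$ torsion free, and I would then linearize through the Mal'cev completion: $\sqrt{H}$ corresponds to a rational Lie subalgebra $\mathfrak{s}\subseteq\mathfrak{g}_{\mathbb{Q}}$ and conjugation by $g$ to the unipotent automorphism $A=\mathrm{Ad}(g)=\exp(\nu)$ with $\nu=\mathrm{ad}(\log g)$ nilpotent, so $A^{n}=\exp(n\nu)$. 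The hypothesis says $\exp(n\nu)\mathfrak{s}=\mathfrak{s}$; iterating, $\exp(t\nu)v\in\mathfrak{s}$ for every $v\in\mathfrak{s}$ and every $t\in n\mathbb{Z}$. Since $\nu$ is nilpotent, $t\mapsto\exp(t\nu)v$ is polynomial, so its image in $\mathfrak{g}_{\mathbb{Q}}/\mathfrak{s}$ is a polynomial vanishing at infinitely many points, hence identically; at $t=1$ this gives $Av\in\mathfrak{s}$, so $A\mathfrak{s}=\mathfrak{s}$ and $g\in N_G(\sqrt{H})$. A purely group-theoretic alternative is an induction on the nilpotency class exploiting the same polynomial behaviour of the maps $i\mapsto g^{i}sg^{-i}$, but the Lie-theoretic formulation isolates the mechanism most cleanly.
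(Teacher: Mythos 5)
Your proposal is correct in substance, but note first that the paper does not actually prove Lemma~\ref{properties-of-radical}: its ``proof'' is a one-line citation to Baumslag~\cite[Lemma~2.8]{MR0283082} and Brown~\cite[Lemma~4]{MR0352324}, where these isolator properties are established by elementary commutator-collection arguments. Your reconstruction takes a genuinely different route. You outsource two classical facts (in a finitely generated nilpotent group the root set $R(L)=\{x : x^m\in L \text{ for some } m\}$ is a subgroup, is radical, hence equals $\sqrt{L}$; and $[\sqrt{L}:L]<\infty$), after which part (2) and both halves of part (1) are formal; your detour through $G/\tau$ for the finite-index claim is actually unnecessary, since $H\subseteq\sqrt{K}$ already gives $[H:K]\leq[\sqrt{K}:K]<\infty$ directly. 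For part (3) your two-inclusion scheme is sound: naturality of the isolator under conjugation plus radicality of $N_G(\sqrt{H})$ gives one inclusion, and your pigeonhole argument via M.~Hall's finiteness of subgroups of a given index (the paper's Lemma~\ref{finite-finite.index}, deployed in exactly the same way in step (a) of the paper's proof of Theorem~\ref{main-theorem}) gives the other. The genuinely distinctive ingredient is your Mal'cev-completion proof that $N_G(\sqrt{H})$ is radical: linearizing conjugation to the unipotent operator $\mathrm{Ad}(g)=\exp(\nu)$ and observing that the polynomial curve $t\mapsto \exp(t\nu)v$ taken modulo $\mathfrak{s}$ vanishes on $n\mathbb{Z}$, hence identically, is correct, and the reduction modulo the finite torsion subgroup $\tau$ works because every radical subgroup contains $\tau$. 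What your approach buys is a conceptually transparent, coordinate-free mechanism for the crux, at the price of invoking Mal'cev theory and two unproved (though standard) structural facts; the Baumslag--Brown route stays entirely inside elementary group theory. One cosmetic inaccuracy: the odd-power hypothesis plays no role anywhere in your argument, and your speculation that oddness is ``exactly what one needs'' for the $2$-primary bookkeeping is unsubstantiated --- harmlessly so, since the classical facts you invoke hold for arbitrary powers and you therefore prove a statement stronger than the one asserted.
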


\begin{proof} Follows from Baumslag~\cite[Lemma~2.8]{MR0283082} and Brown~\cite[Lemma 4]{MR0352324}.

\end{proof}

\begin{lemma}
    \label{lem: radical}
    \label{remark-kutzko}
   Any $g \in G$ such that $HgH$ is union of finite number of right cosets of  $H$ in $G$, has the property that for all $h \in H$ there exists $\kappa(h, g) \in \mathbb N$
    such that $(g h g^{-1})^{\kappa(h, g)} \in H$.
\end{lemma}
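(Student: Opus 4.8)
The plan is to translate the coset hypothesis into a finiteness statement about the index of $H \cap g^{-1}Hg$ in $H$, and then extract the required power by a pigeonhole argument. First I would write $HgH = \bigcup_{h \in H} Hgh$ and observe that two of these right cosets coincide, $Hgh_1 = Hgh_2$, precisely when $g h_1 h_2^{-1} g^{-1} \in H$, that is, when $h_1 h_2^{-1} \in H \cap g^{-1}Hg$. Hence the fiber of the assignment $h \mapsto Hgh$ over a fixed coset $Hgh_0$ is exactly the right coset $(H \cap g^{-1}Hg)h_0$, so this assignment induces a bijection between the right cosets of $H \cap g^{-1}Hg$ in $H$ and the distinct right cosets of $H$ whose union is $HgH$. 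Consequently the hypothesis that $HgH$ is a union of finitely many right cosets of $H$ is equivalent to $m := [H : H \cap g^{-1}Hg] < \infty$.

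Next I would fix $h \in H$ and consider the $m+1$ elements $h^0, h^1, \ldots, h^m$ of $H$. Since there are only $m$ right cosets of $H \cap g^{-1}Hg$ in $H$, the pigeonhole principle produces indices $0 \le i < j \le m$ for which $h^i$ and $h^j$ lie in the same right coset, i.e. $h^{j-i} = h^j (h^i)^{-1} \in H \cap g^{-1}Hg$. Setting $\kappa(h,g) := j - i \in \mathbb N$, we then have $h^{\kappa(h,g)} \in g^{-1}Hg$, and therefore $(ghg^{-1})^{\kappa(h,g)} = g\, h^{\kappa(h,g)} g^{-1} \in H$, which is exactly the asserted conclusion.

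I do not expect a serious obstacle here: the entire content of the lemma is the reformulation of the coset hypothesis as the finite-index condition $[H : H \cap g^{-1}Hg] < \infty$, and the only point demanding care is the bookkeeping of left versus right cosets, so that the index one computes is finite and the pigeonhole principle is applied to the correct family of cosets. I would also note that this argument uses neither nilpotency nor finite generation of $G$, so the statement in fact holds for an arbitrary group; the nilpotence hypothesis only becomes relevant when the lemma is later combined with the radical machinery of Lemma~\ref{properties-of-radical}, where the conclusion $(ghg^{-1})^{\kappa} \in H$ is interpreted as $ghg^{-1} \in \sqrt{H}$.
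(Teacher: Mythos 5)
Your proof is correct and follows essentially the same route as the paper: the paper likewise observes that the cosets $Hgh^i$ all lie in $HgH$ and so cannot be mutually distinct, giving $Hgh^{\kappa} = Hg$ and hence $(ghg^{-1})^{\kappa} \in H$. Your explicit reformulation via $[H : H \cap g^{-1}Hg] < \infty$ is just a more detailed bookkeeping of the same pigeonhole argument (with the small bonus of a bound on $\kappa(h,g)$ uniform in $h$), and your closing remark that neither nilpotency nor finite generation is needed is also consistent with the paper's proof.
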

\begin{proof}
 Let $g \in G$ be such that $HgH$ is union of finite number of right cosets of $H$ in $G$. Then for any $h \in H$, the sets $Hg h^i$ can not be mutually distinct right cosets of $H$ in $G$. Therefore, there exists $\kappa(h,g)$, depending on $h$ and $g$, such that
	$H gh^{\kappa(h,g)} = Hg$. This implies, $(g hg^{-1})^{\kappa(h, g)} \in H$ for all $h \in H$.


\end{proof}

\begin{proposition}\label{reduction-to-normalizer} If $H$ is a subgroup of $G$,
$\chi$ a character of $H$ and for all $g \in N_G(\sqrt{H})
\setminus H$ we have $\chi^g \neq \chi$ on $H^g \cap H$, then
$\mathrm{Ind}_H^G(\chi)$ is Schur irreducible.
\end{proposition}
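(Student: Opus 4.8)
The plan is to compute $\mathrm{End}_G(\ind_H^G(\chi))$ via Theorem~\ref{thm:kutzko} and show it collapses to the scalars. By part (2) of that theorem, $\mathrm{End}_G(\ind_H^G(\chi)) \cong \bigoplus_{g \in \Lambda} I(\chi, \chi^g)$, where $\Lambda$ runs over $(H,H)$-double coset representatives for which $HgH$ is a finite union of right $H$-cosets. Since $\chi$ is one dimensional, $W = \mathbb{C}$ and each summand $I(\chi,\chi^g)$ — the space of $H \cap g^{-1}Hg$-intertwiners between the characters $\chi$ and $\chi^g$ — is either $0$ or $\mathbb{C}$, being nonzero precisely when $\chi^g = \chi$ on $H^g \cap H$. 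The identity coset always contributes the scalars, so Schur irreducibility is equivalent to the vanishing of $I(\chi,\chi^g)$ for every $g \in \Lambda \setminus H$, i.e. to the condition $\chi^g \neq \chi$ on $H^g \cap H$ for all such $g$. Since the hypothesis grants exactly this for $g \in N_G(\sqrt H) \setminus H$, and since $H \subseteq N_G(\sqrt H)$, the entire statement reduces to the single inclusion $\Lambda \subseteq N_G(\sqrt H)$.

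To establish this inclusion I would fix $g \in \Lambda$ and apply Lemma~\ref{lem: radical}, which yields for each $h \in H$ an exponent $\kappa(h,g)$ with $(ghg^{-1})^{\kappa(h,g)} \in H \subseteq \sqrt H$. Because $\sqrt H$ is radical this forces $ghg^{-1} \in \sqrt H$ for every $h \in H$, so $gHg^{-1} \subseteq \sqrt H$. Applying the radical operation, using that conjugation is an automorphism and hence commutes with $\sqrt{\,\cdot\,}$, together with Lemma~\ref{properties-of-radical}(2), I obtain $g\sqrt H g^{-1} = \sqrt{gHg^{-1}} \subseteq \sqrt{\sqrt H} = \sqrt H$.

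The hard part — and precisely the point of the gap noted in the introduction — is upgrading the inclusion $g\sqrt H g^{-1} \subseteq \sqrt H$ to the equality $g\sqrt H g^{-1} = \sqrt H$ that defines $N_G(\sqrt H)$, since a priori a conjugate of a subgroup may sit strictly inside it. Here I would invoke the maximal condition on subgroups enjoyed by every finitely generated nilpotent (indeed polycyclic) group. Rewriting the inclusion as $\sqrt H \subseteq g^{-1}\sqrt H g$ and iterating produces an ascending chain $\sqrt H \subseteq g^{-1}\sqrt H g \subseteq g^{-2}\sqrt H g^2 \subseteq \cdots$, which must stabilize; from $g^{-n}\sqrt H g^n = g^{-(n+1)}\sqrt H g^{n+1}$ one conjugates back by $g^n$ to deduce $\sqrt H = g^{-1}\sqrt H g$, that is $g \in N_G(\sqrt H)$. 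This gives $\Lambda \subseteq N_G(\sqrt H)$ and, combined with the reduction of the first paragraph, shows $\mathrm{End}_G(\ind_H^G(\chi)) \cong \mathbb{C}$, so $\ind_H^G(\chi)$ is Schur irreducible.
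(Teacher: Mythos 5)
Your proof is correct and follows essentially the same route as the paper: Theorem~\ref{thm:kutzko} reduces the claim to the inclusion $\Lambda \subseteq N_G(\sqrt{H})$, which the paper also obtains from Lemma~\ref{lem: radical} together with Lemma~\ref{properties-of-radical}(2), and your ascending-chain argument merely makes explicit a step the paper leaves implicit (passing from $g\sqrt{H}g^{-1} \subseteq \sqrt{H}$ to $g \in N_G(\sqrt{H})$, which is valid because finitely generated nilpotent groups are polycyclic and hence satisfy the maximal condition on subgroups). One correction to your commentary: this inclusion-versus-equality point is \emph{not} ``the gap noted in the introduction''; the gap Parshin pointed out was the inference from Schur irreducibility ($\mathrm{End}_G(\mathrm{Ind}_H^G(\chi)) \cong \mathbb{C}$) to genuine irreducibility, which can fail for non-unitary representations of infinite groups and is repaired separately in Theorem~\ref{thm:belo-gorch} --- the proposition itself asserts only Schur irreducibility, which is exactly what you prove.
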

\begin{proof} By Theorem~\ref{thm:kutzko}, it is enough to prove that $g \in \Lambda$ implies
that $g \in N_G(\sqrt{H})$. From Lemma~\ref{lem: radical}, we
have $g \in \Lambda $ implies that $g Hg^{-1} \subseteq
\sqrt{H}$. Therefore $g \sqrt{H} g^{-1} \subseteq \sqrt{\sqrt{H}}$
and by Lemma~\ref{properties-of-radical}(2), we get $ g
\sqrt{H} g^{-1} \subseteq \sqrt{H}$, that is $g \in
N_G(\sqrt{(H)})$.

We remark that the equivalence of the one dimensionality of the space
of intertwining operators and irreducibility of representations, though true for finite
groups or unitary representations, need not hold for other cases. This was the gap in our earlier draft which was pointed out to us by Prof. Parshin. Below we give proof of this fact, combining ideas of \cite{Beloshapka-Gorchinskiy} and \cite{Arnal-Parshin}.

\end{proof}
\begin{thm}\label{thm:belo-gorch}Let $G$ be a finitely generated nilpotent group. Let $H$
be a subgroup of $G$ and $W$ be an irreducible representation of $H$. If $\mathrm{End}_G(\mathrm{Ind}_H^G(\chi)) \cong \mathbb C$, then the representation
$\mathrm{Ind}_H^G(W)$ is irreducible.
\end{thm}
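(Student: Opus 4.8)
The plan is to upgrade the numerical Schur condition to genuine irreducibility by showing that the canonical cyclic generator lies inside \emph{every} nonzero submodule. Write $V = \mathrm{Ind}_H^G(\chi)$ and let $v_0$ be the function supported on $H$ with $v_0(h)=\chi(h)$ (this is the element $f^{1}$ from the proof of Theorem~\ref{thm:kutzko}); it satisfies $\widetilde\rho(h)v_0=\chi(h)v_0$ and generates $V$ as a $\mathbb{C}[G]$-module. First I would record the reformulation coming from the tensor--hom adjunction for the induction of Definition~\ref{defn:induced}: $\mathrm{End}_G(\mathrm{Ind}_H^G(\chi)) \cong \mathrm{Hom}_H(\chi,\, \mathrm{Res}_H\,\mathrm{Ind}_H^G(\chi)) = V_H(\chi)$. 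Thus the hypothesis says precisely that the eigenspace $V_H(\chi)$ is one-dimensional, spanned by $v_0$. Any nonzero submodule $U$ that contains a nonzero $H$-eigenvector of eigenvalue $\chi$ must then contain a scalar multiple of $v_0$, hence $v_0$, hence $U=V$. Irreducibility therefore reduces to the \emph{Key Claim:} every nonzero $G$-submodule $U\subseteq V$ contains a nonzero $\chi$-eigenvector for $H$.

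Next I would extract two structural facts. Since $G$ is nilpotent, $Z(G)\neq 1$, and each $z\in Z(G)$ gives $\widetilde\rho(z)\in\mathrm{End}_G(V)=\mathbb{C}$, so $z$ acts by a scalar $\omega(z)$; comparing supports ($\widetilde\rho(z)v_0$ is supported on the coset $Hz^{-1}$, which equals $H$ only when $z\in H$) forces $Z(G)\subseteq H$ and $\omega|_{Z(G)}=\chi|_{Z(G)}$. This makes an induction on the Hirsch length of $G$ available: if $\ker(\chi|_{Z(G)})$ contains an element of infinite order, choose an infinite cyclic central $Z_0\subseteq \ker(\chi|_{Z(G)})\subseteq H$; then $Z_0$ acts trivially on $V$, so $V=\mathrm{Ind}_{H/Z_0}^{G/Z_0}(\bar\chi)$ with the same one-dimensional eigenspace, and $G/Z_0$ has strictly smaller Hirsch length, so the Key Claim follows by induction. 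The base case $G$ finite is classical.

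The mechanism for the Key Claim itself is a minimal-support argument in Brown's spirit. Given $0\neq u\in U$, let its support consist of the right cosets $Hg_1,\dots,Hg_n$ with $n$ minimal over nonzero elements of $U$. On the coset $Hg_i$ the subgroup $H\cap H^{g_i}$ acts through the conjugate character $\chi^{g_i}$, where $\chi^{g_i}(h)=\chi(g_i h g_i^{-1})$. If $n\geq 2$ and there is an $h\in\bigcap_i (H\cap H^{g_i})$ with $\chi^{g_i}(h)\neq\chi^{g_1}(h)$ for some $i$, then $\widetilde\rho(h)u-\chi^{g_1}(h)\,u$ is a nonzero element of $U$ whose support omits $Hg_1$, contradicting minimality; and once $n=1$, translating $u$ by $g_1$ lands it on the coset $H$, producing a nonzero multiple of $v_0$ in $U$. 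So the entire difficulty is concentrated in producing such a separating element $h$, and that is the step I expect to be the main obstacle.

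This is exactly where the essential case left open by the Hirsch-length reduction must be handled: when $\ker(\chi|_{Z(G)})$ is finite and hence there is an infinite-order central $z_0$ with $\alpha:=\chi(z_0)$ not a root of unity. Here the commutator relations of the nilpotent group feed the non-root-of-unity scalar $\alpha$ into the conjugate characters: since $\chi^{g_i}(h)=\chi(h)\chi([g_i,h])$ and the relevant commutators $[g_i,h]$ are central of the form $z_0^{m_i}$, the values become $\chi(h)\alpha^{m_i}$, which are distinct once the $m_i$ are distinct, supplying the needed $h$. The prototype is the discrete Heisenberg group, where $H=\langle x,z\rangle$ is normal and on the coset basis $e_k=\widetilde\rho(y^k)v_0$ the element $x$ acts by the pairwise distinct eigenvalues $\chi(x)\alpha^{\mp k}$, forcing every submodule to be a span of the $e_k$ it contains and hence, by the transitive shift action of $y$, to be $0$ or $V$. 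Equivalently one may regard $V$ as a projective representation of $G/Z(G)$ with cocycle determined by $\omega$, reducing the nilpotency class; carrying out this separation/diagonalization in general is the technical heart of the argument, and it is there that I would import the ideas of \cite{Arnal-Parshin} and \cite{Beloshapka-Gorchinskiy} cited before the statement.
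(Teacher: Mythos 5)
There is a genuine gap, and it sits exactly where you say you expect it: your Key Claim is never proved, and the mechanism you sketch for it is false in general. You need, for a minimal-support vector with support cosets $Hg_1,\dots,Hg_n$, an element $h\in\bigcap_i\bigl(H\cap H^{g_i}\bigr)$ with $\chi^{g_i}(h)\neq\chi^{g_1}(h)$ for some $i$. Your proposed source of such an $h$ — that the commutators $[g_i,h]$ are central powers $z_0^{m_i}$ of a fixed infinite-order central element, so that $\chi^{g_i}(h)=\chi(h)\alpha^{m_i}$ with $\alpha$ not a root of unity — holds in the discrete Heisenberg prototype but not in general: $[g_i,h]$ lies only in $[G,G]$, need not be central, and need not have anything to do with $z_0$. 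Worse, the hypothesis $\mathrm{End}_G(\mathrm{Ind}_H^G(\chi))\cong\mathbb C$, read through Theorem~\ref{thm:kutzko}, only forbids intertwining along double cosets $HgH$ that are finite unions of right cosets (the set $\Lambda$); it says nothing for $g\notin\Lambda$, and it never guarantees separation on the common intersection $\bigcap_i(H\cap H^{g_i})$, which can be far smaller than the pairwise intersections $H\cap H^{g_i}$. Deferring this step to \cite{Arnal-Parshin} and \cite{Beloshapka-Gorchinskiy} is deferring the whole theorem — this is precisely the point at which the authors' own earlier draft had its gap. A secondary defect: you only treat one-dimensional $W=\chi$, whereas the statement concerns an arbitrary irreducible representation $W$ of $H$, and the higher-dimensional case cannot be dismissed (the paper genuinely needs it).

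For contrast, the paper's proof sidesteps the non-normal separation problem entirely. Nilpotency makes $H$ subnormal, $H=H_0\trianglelefteq H_1\trianglelefteq\cdots\trianglelefteq H_k=G$, and by (\ref{frob-reci}) the algebra $\mathrm{End}_{H_1}(\mathrm{Ind}_H^{H_1}(W))$ embeds into $\mathrm{End}_G(\mathrm{Ind}_H^G(W))\cong\mathbb C$; inducting on the minimal length of such a series reduces everything to the case $H$ normal in $G$. There every double coset $HgH$ is a single right coset, so all of $G$ lies in $\Lambda$ and Kutzko's theorem yields $\chi^g\neq\chi$ on \emph{all} of $H$ for every $g\notin H$ — exactly the separating elements your minimal-support step needs, with no shrinking intersections. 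The component-killing operator $(\chi^{g_k}(h))^{-1}\pi(h)-I$ then runs just as in your third paragraph, and for $\dim W>1$ the paper replaces this scalar trick by choosing $\alpha\in\mathbb C[H]$ separating the distinct annihilator ideals of the pairwise non-isomorphic conjugates $W^{g_i}$ in $\mathbb C[H]$. Your correct preliminary observations — the adjunction $\mathrm{End}_G(\mathrm{Ind}_H^G(\chi))\cong V_H(\chi)$, the deduction $Z(G)\subseteq H$ with scalar central action, and the Hirsch-length reduction when $\chi$ kills an infinite-order central element — are sound but become unnecessary once the subnormality reduction is in place; I would rebuild your argument around that reduction rather than trying to complete the separation argument for non-normal $H$.
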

\begin{proof}  First of all, we consider the case of $H$ normal subgroup of $G$ and $W$ being one dimensional representation of $H$. We denote the action of $H$ on $W$ by $\chi$ and the induced representation $\mathrm{Ind}_H^G(\chi)$ by $(V, \pi)$. By Remark~\ref{rk:general-intertwiner}, we have $V|_H = \oplus_{g \in G/H }  W^g$. To prove that $V$ is irreducible it is enough to show that any $v \in V$ generates $V$. We note that any $v \in V$ can be written as
\begin{equation}
v = \sum_i v_i, \,\, v_i \in W^{g_i},
\end{equation}
such that only finitely many are non-zero. In case, $v = v_i$ for some $i$ then $v$ clearly generates $V$ because $W^g$ is irreducible representation of $H$. To prove our result, we use induction on number of non-zero constituents of $v$. Let $v = v_1 + v_2 + \cdots + v_k$ such that $v_i \in W^{g_i}$ are all non-zero and $k \geq 2$.

The group $H$ is normal in $G$ implies $HgH$ for any $g \in G$ has finite image in $H \backslash G$. Therefore by Theorem~\ref{thm:kutzko}, we have $\chi^g \neq \chi$ on $H$ for any $g \notin H$.  In other words, $\chi^{g_1} \neq \chi^{g_k}$ on $H$ for $Hg_1 \neq H g_k$. We fix $h \in H$ such that $\chi^{g_1}(h) \neq \chi^{g_k}(h)$. Therefore we have the following,
\begin{equation}
((\chi^{g_k}(h))^{-1}\pi(h)-I)(v) = \sum_{i =1}^{k-1} (\chi^{g_k}(h)^{-1} \chi^{g_i}(h) - 1)v_i \in V.
\end{equation}
By the choice of $h$, we have $((\chi^{g_k}(h))^{-1}\pi(h)-I)(v)$ is non-trivial and has number of non-zero components strictly less than that of $v$.Therefore, the result follows by induction.

Next, we  consider the case when group $H$ is normal and $W$ is of arbitrary dimension.
For this case, just to make it clear, we denote action of $H$ on $W$ by $\rho$ instead of $\chi$. The only place, where the above argument is not applicable is in the definition of $((\rho^{g_k}(h))^{-1} \pi(h)-1)$. Since $\rho(h)$ is not a scalar so we can't use this operator as such. To justify the existence of a parallel operator we use the language of group algebra and modules as given in Passman~\cite{passman-infinite-group-rings}. Let $\mathbb C[H]$ be the infinite group algebra of $H$ over $\mathbb C$. Then for $v_1, v_k \in V$ as above, we have  $\mathbb C[H] v_1$ and $\mathbb C[H] v_k$ are irreducible $\mathbb C[H]$-modules. Let $I^{g_1}$ and $I^{g_k}$ denote the annihilators of $v_1$ and $v_k$ respectively in $\mathbb C[H]$. Then we have the following,
\[
\mathbb C[H]/I^{g_1} \cong \mathbb C[H] v_1 \cong W^{g_1} \ncong W^{g_k} \cong \mathbb C[H] v_k \cong \mathbb C[H]/I^{g_k}.
\]
The ideals $I^{g_1}$ and $I^{g_k}$ are clearly distinct. Therefore, there exists $\alpha \in \mathbb C[H]$ such that $\alpha \in (I^{g_1} \setminus I^{g_2}) \cup (I^{g_2} \setminus I^{g_1})$, $\alpha(v)$ is non-trivial and its number of non-zero components is strictly less than that of $v$. Now onwards, the result follows by induction in this case.

For the general case when $H$ is a subgroup (not necessarily normal) of a finitely generated group $G$ and $W$ is an irreducible representation of $H$ of an arbitrary dimension. We assume that $G$ is nilpotent (note that this hypothesis was not required above). Therefore, there exists a sequence of subgroups $H_i$ for $1 \leq i \leq k$ such that,
\[
H = H_0 \trianglelefteq H_1 \trianglelefteq \cdots \trianglelefteq H_k = G.
\]
We call this a normal series of $H$ of length $k$. We use induction on the minimum length of a normal series of a subgroup of $G$. In case $H$ has a normal series of length one then result follows from above. Let result be true for all subgroups with normal series of length less than or equal to $k-1$ and we prove it for $k$. For this, first we note that $\widetilde{W} = \ind_H^{H_1}(W)$ is Schur irreducible by (\ref{frob-reci}), as $\mathrm{End}_{H_1}(\widetilde{W})$ embeds into $\mathrm{End}_G(\ind_H^G(W)) \cong \mathbb C$. The group $H$ is normal in $H_1$, therefore by above $\widetilde{W}$ is irreducible. Next, we note that the representation $\ind_{H_1}^G(\widetilde{W}) \cong \ind_H^G(W)$ is Schur irreducible and the minimum length of a normal series of $H_1$ is less than $k$, therefore we obtain $\ind_H^G(W)$ is irreducible.

\end{proof}

\begin{lemma}
\label{non-zero-intertwiner}Let $(\pi, V)$ be an irreducible representation of $G$ such that
$V_H(\chi) \neq 0$. Then  $I(\mathrm{Ind}_H^G(\chi), \pi) \neq 0$.

\end{lemma}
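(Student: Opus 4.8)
The plan is to use Frobenius reciprocity to produce the required intertwining operator. The statement we must prove is that if $(\pi, V)$ is irreducible and the weight space $V_H(\chi)$ is nonzero, then $I(\ind_H^G(\chi), \pi) \neq 0$; here $I(\cdot,\cdot)$ denotes the space of $G$-linear maps. The natural strategy is to construct a nonzero element of $\Hom_G(\ind_H^G(\chi), \pi)$ directly, or equivalently to relate this Hom-space to $\Hom_H$ of restrictions, where the hypothesis $V_H(\chi) \neq 0$ can be brought to bear.

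First I would recall that, by definition, $V_H(\chi) \neq 0$ means there is a nonzero vector $v_0 \in V$ with $\pi(h)v_0 = \chi(h)v_0$ for all $h \in H$; in other words, the one-dimensional representation $\chi$ of $H$ embeds into the restriction $\pi|_H$, so $\Hom_H(\chi, \pi|_H) \neq 0$. The cleanest route is then to build a $G$-linear map explicitly: to an element $f \in \ind_H^G(\chi)$, which is a finitely-supported function on right cosets transforming by $\chi$ on the left, I would associate the vector $\sum_{Hg} f(g)\,\pi(g^{-1}) v_0 \in V$, summing over the finite support. One checks this is well-defined (independent of coset representative, using the transformation rule $f(hg) = \chi(h)f(g)$ together with $\pi(h^{-1})v_0 = \chi(h)^{-1}v_0$), that it is nonzero (evaluate on the function supported on $H$ with value $1$, which maps to $v_0 \neq 0$), and that it intertwines the $G$-actions, where $\ind_H^G(\chi)$ acts by right translation $f \mapsto (x \mapsto f(xg))$ and $\pi$ acts on $V$. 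This produces a nonzero element of $\Hom_G(\ind_H^G(\chi), \pi)$.

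Alternatively, one can invoke the reciprocity statements already recorded in Remark~\ref{rk:general-intertwiner}: the hypothesis gives $\Hom_H(\pi|_H, \chi)\neq 0$ or $\Hom_H(\chi, \pi|_H)\neq 0$, and a Frobenius-type adjunction converts this into a nonzero $G$-map involving $\ind_H^G(\chi)$. However, since $H$ need not have finite index in $G$, the finite-index form (\ref{frobenius-reciprocity}) does not apply verbatim, so I would prefer the explicit construction above, where the finite-support condition in the definition of the induced space is exactly what guarantees the sum is finite and hence the formula makes sense.

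The main obstacle I anticipate is checking that the explicitly defined map lands in $V$ and is $G$-equivariant without any finiteness assumption on the index of $H$, since $\pi$ is only assumed countable-dimensional and possibly infinite-dimensional. The finite-weight hypothesis is what controls the support, so the sum defining the image is genuinely finite; verifying well-definedness (coset-independence) is the one routine but essential computation, and confirming the map is nonzero reduces to exhibiting a single function in the induced space whose image is the weight vector $v_0$. Once these are in place, irreducibility of $\pi$ is not even needed for nonvanishing of the intertwiner — it is only the existence of the weight vector that matters — so the proof should be short.
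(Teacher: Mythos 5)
Your proposal is correct and takes essentially the same approach as the paper: your closed-form map $f \mapsto \sum_{Hg} f(g)\,\pi(g^{-1})v_0$ is exactly the paper's intertwiner $F$, which is defined on the basis functions $f_i$ by $F(f_i) = \pi(g_i^{-1})v$ and extended linearly, with the same well-definedness, equivariance, and nonvanishing checks (and you are right that irreducibility of $\pi$ is not needed for nonvanishing). One small wording slip: the finiteness of the sum comes from the finite-support condition built into the definition of induction, as you correctly note earlier, not from the finite-weight hypothesis, which plays no role in this lemma.
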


\begin{proof} Let $\rho$ denote the representation $\mathrm{Ind}_H^G(\chi).$ We prove that the space of intertwining operators $I(\mathrm{Ind}_H^G(\chi), \pi)$
has positive dimension. Then the result follows from Schur's
lemma. Let $X = \{g_i \mid i \in \mathbb I \}$ be the right coset
representatives of $H$ in $G$. For a fixed $v \in V_H(\chi)$, let
$W$ be the one dimensional space generated by $v$. We define
functions $f_i: X \rightarrow W$ by $f_i(g_j) = \delta_{i,j}(v)$ and
extend these to $G$ so that $f_i \in \mathrm{Ind}_H^G(\chi)$ for
all $i \in \mathbb I$. Then any $f \in \mathrm{Ind}_H^G(\chi)$ can
be written as linear combination of $f_i$'s. For any $g \in G$, we
have

\begin{eqnarray}\label{g-action}
\rho(g)f_i(hg_j) = f_i(hg_jg) = \chi(h) f_i(g_jgg_{i}^{-1}g_i)
\end{eqnarray}

Therefore $\rho(g)f_i$ is nonzero on $g_j$ for $g_j$ satisfying
$g_jg \in H{g_i}$. Thus $\rho(g)f_i = \chi(g_jgg_{i}^{-1})f_j$ for
$j$ such that $g_jg \in H{g_i}$. Now define $F:
\mathrm{Ind}_H^G(\chi) \rightarrow \pi$ by $F(f_i) =
\pi(g_i^{-1})v$ on $f_i$'s and extended linearly thereof. Then,
        \[
        F(\rho(g)f_i) = F(\chi(g_jgg_{i}^{-1}) f_j) = \chi(g_jgg_{i}^{-1})\pi(g_j^{-1})v = \pi(g) \pi(g_i^{-1})v.
        \]
This shows that $F \in I(\mathrm{Ind}_H^G(\chi), \pi)$ is a
non-zero inertwiner and therefore $\mathrm{Ind}_H^G(\chi) \cong
\pi $.

\end{proof}
The above lemma implies the following useful result.
\begin{proposition}\label{reduction}
	
	Let $(\pi, V)$ be an irreducible representation of $G$ such that
	$V_H(\chi) \neq 0$. If $\mathrm{Ind}_H^G(\chi)$ is irreducible
	then $\pi \cong \mathrm{Ind}_H^G(\chi)$. Therefore $\pi$ is
	monomial in this case.
	
\end{proposition}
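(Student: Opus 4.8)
The plan is to deduce the proposition directly from Lemma~\ref{non-zero-intertwiner} by the standard Schur-type argument for simple modules, the only point requiring care being that the representations are possibly countably infinite dimensional, which in fact does not affect the kernel--image reasoning.

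First I would apply Lemma~\ref{non-zero-intertwiner} to the standing hypothesis $V_H(\chi) \neq 0$ to produce a non-zero $G$-linear map $F \colon \mathrm{Ind}_H^G(\chi) \to \pi$; concretely this is precisely the intertwiner constructed there by $F(f_i) = \pi(g_i^{-1})v$. Next, since $\mathrm{Ind}_H^G(\chi)$ is assumed irreducible, its $G$-submodule $\ker F$ is either $0$ or the whole space; as $F \neq 0$ rules out $\ker F = \mathrm{Ind}_H^G(\chi)$, we obtain $\ker F = 0$, so $F$ is injective. Symmetrically, the image $F(\mathrm{Ind}_H^G(\chi))$ is a non-zero $G$-invariant subspace of the irreducible module $V$, hence equals $V$, so $F$ is surjective. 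Therefore $F$ is an isomorphism of $G$-representations and $\pi \cong \mathrm{Ind}_H^G(\chi)$.

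Finally, since $\chi$ is a one dimensional character of the subgroup $H$, the representation $\mathrm{Ind}_H^G(\chi)$ is monomial by definition, and consequently so is $\pi$, giving the last assertion. I do not anticipate a genuine obstacle: the kernel and image of a $G$-module homomorphism are $G$-submodules regardless of dimension, so the irreducibility of both source and target forces a non-zero map between them to be an isomorphism. The only subtlety worth an explicit sentence is that \emph{irreducible} is used throughout in the algebraic sense of admitting no proper non-zero invariant subspace, so that this Schur argument applies verbatim in the countable dimensional setting, with no appeal to any topological or analytic structure.
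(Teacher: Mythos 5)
Your proof is correct and follows exactly the paper's route: the paper states Proposition~\ref{reduction} as an immediate consequence of Lemma~\ref{non-zero-intertwiner}, with precisely the kernel--image argument you spell out (a non-zero $G$-map between two irreducible representations is an isomorphism, valid in the purely algebraic, possibly infinite dimensional setting). Your explicit remark that no topological structure is needed is a fair elaboration of what the paper leaves implicit, but it introduces nothing different in substance.
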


The following two lemmas whose proofs are fairly standard will play a crucial role in the proof of the main result.

\begin{lemma}\label{diamond-lemma}

Let $H$ and $K$ be two subgroups of $G$ and $\chi: H \rightarrow
\mathbb C^{\times}$, $\delta: K \rightarrow \mathbb C^{\times}$ be
characters of $H$ and $K$ respectively such that,

\begin{enumerate}

\item $kHk^{-1} \subseteq H$ for all $k \in K$, i.e. $K$
normalizes $H$.

\item $\chi(khk^{-1}) = \chi(h)$ for all $h \in H$ and $k \in K$.

\item $\chi|_{H \cap K} = \delta|_{H \cap K}$

\end{enumerate}

Then $\chi \delta: HK \rightarrow \mathbb C^{\times}$ defined by
$\chi \delta(hk) = \chi(h) \delta(k)$ for all $h \in H$ and $k \in
K$ is a character of $HK$ such that $\chi \delta|_{H} = \chi$.

\end{lemma}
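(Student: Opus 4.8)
The plan is to verify three things in turn: that $HK$ is a subgroup on which the formula even makes sense, that the assignment $\chi\delta(hk) = \chi(h)\delta(k)$ does not depend on the chosen factorization of an element as $hk$, and that the resulting map is multiplicative. The restriction statement $\chi\delta|_H = \chi$ will then fall out immediately.

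First I would sharpen hypothesis (1). Applying the containment $kHk^{-1} \subseteq H$ to $k^{-1} \in K$ gives $k^{-1}Hk \subseteq H$, and combining the two inclusions forces $kHk^{-1} = H$ for every $k \in K$. From this it follows in the standard way that $HK$ is a subgroup, since $(h_1k_1)(h_2k_2) = \bigl(h_1\,(k_1 h_2 k_1^{-1})\bigr)(k_1 k_2)$ lies in $HK$ (as $k_1 h_2 k_1^{-1} \in H$) and $(hk)^{-1} = (k^{-1}h^{-1}k)\,k^{-1}$ lies in $HK$ as well.

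The crux is well-definedness, and this is the one step I expect to require genuine (though still brief) care. Suppose $h_1 k_1 = h_2 k_2$ with $h_i \in H$ and $k_i \in K$. Then $t = h_2^{-1} h_1 = k_2 k_1^{-1}$ lies in $H \cap K$, and writing $h_1 = h_2 t$ and $k_2 = t k_1$ I would compute $\chi(h_1)\delta(k_1) = \chi(h_2)\chi(t)\delta(k_1)$ and $\chi(h_2)\delta(k_2) = \chi(h_2)\delta(t)\delta(k_1)$. Since $t \in H \cap K$, hypothesis (3) gives $\chi(t) = \delta(t)$, so the two expressions coincide. This is precisely where hypothesis (3) and the common-subgroup structure are used.

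For multiplicativity I would compute $\chi\delta\bigl((h_1k_1)(h_2k_2)\bigr) = \chi\bigl(h_1 (k_1 h_2 k_1^{-1})\bigr)\delta(k_1 k_2) = \chi(h_1)\,\chi(k_1 h_2 k_1^{-1})\,\delta(k_1)\delta(k_2)$, and then invoke hypothesis (2) to replace $\chi(k_1 h_2 k_1^{-1})$ by $\chi(h_2)$; since all values lie in the abelian group $\mathbb C^{\times}$ they may be reordered to match $\chi\delta(h_1k_1)\,\chi\delta(h_2k_2)$. Finally, taking $k$ to be the identity $e$ shows $\chi\delta(h) = \chi(h)\delta(e) = \chi(h)$, so $\chi\delta|_H = \chi$, completing the argument.
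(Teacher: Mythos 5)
Your proof is correct and follows essentially the same route as the paper: the well-definedness step, extracting $t = h_2^{-1}h_1 = k_2k_1^{-1} \in H \cap K$ and invoking hypothesis (3), is exactly the paper's argument. The remaining verifications (that $HK$ is a subgroup, multiplicativity via hypothesis (2), and the restriction $\chi\delta|_H = \chi$) are simply the details the paper compresses into ``rest of the proof follows easily,'' and you have carried them out correctly.
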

\begin{proof}
	The well defined-ness of $\chi \delta$ follows because $h_1 k_1 = h_2 k_2$ implies $h_2^{-1} h_1 = k_2 k_1^{-1} \in H \cap K$. Therefore $\chi(h_2)^{-1} \chi(h_1) = \delta (k_2) \delta(k_1)^{-1}$. Rest of the proof follows easily.

\end{proof}	
\begin{lemma} (see \cite{Hall1950})
	\label{finite-finite.index}
	For a finitely generated group $G$, there exists only finitely many subgroups of a given index.

\end{lemma}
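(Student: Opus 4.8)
The plan is to reduce the counting of index-$n$ subgroups to the counting of homomorphisms from $G$ into the finite symmetric group $S_n$, exploiting finite generation at the very last step. Fix the index $n$ and a finite generating set $g_1, \dots, g_k$ of $G$. To each subgroup $H \leq G$ with $[G:H] = n$ I would associate the coset action: let $G$ act on the left coset space $G/H$, which has exactly $n$ elements, by left multiplication $x \mapsto gx H$. After fixing a bijection $G/H \to \{1, \dots, n\}$ that sends the trivial coset $H$ to the point $1$, this action becomes a homomorphism $\phi_H : G \to S_n$.

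The key observation is that $H$ can be recovered from $\phi_H$ purely group-theoretically: the stabilizer of the point $1$ under the $G$-action is precisely $\{ g \in G \mid gH = H \} = H$, so $H = \phi_H^{-1}\big(\Stab_{S_n}(1)\big)$. Consequently the assignment $\phi \mapsto \phi^{-1}(\Stab_{S_n}(1))$ from the set $\Hom(G, S_n)$ of homomorphisms to the set of subgroups of $G$ has every index-$n$ subgroup in its image. (The bijection used to label the remaining cosets is immaterial; it only means several homomorphisms may recover the same $H$, which can only shrink the count on the subgroup side.) Therefore the number of subgroups of index $n$ is bounded above by $|\Hom(G, S_n)|$.

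It remains to see that $\Hom(G, S_n)$ is finite, and this is exactly where finite generation enters. Any homomorphism $\phi : G \to S_n$ is completely determined by the images $\phi(g_1), \dots, \phi(g_k)$ of the generators, since these determine $\phi$ on every word in the $g_i$, hence on all of $G$. As each $\phi(g_i)$ is one of the $n!$ elements of $S_n$, there are at most $(n!)^k$ possibilities, so $|\Hom(G, S_n)| \leq (n!)^k < \infty$. Combining this with the previous paragraph yields that $G$ has at most $(n!)^k$ subgroups of index $n$, which proves the lemma.

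The argument is genuinely routine, so there is no serious technical obstacle; the only point requiring a little care is the bookkeeping in the second paragraph, namely verifying that $H \mapsto \phi_H$ together with the recovery map $\phi \mapsto \phi^{-1}(\Stab_{S_n}(1))$ really does place \emph{every} index-$n$ subgroup in the image of a map out of the finite set $\Hom(G, S_n)$, so that the finiteness transfers. The conceptual heart of the proof is simply the remark that a finitely generated group admits only finitely many homomorphisms into any fixed finite group.
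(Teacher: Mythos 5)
Your proof is correct and is essentially the standard argument behind the cited result: the paper itself gives no proof, deferring to M.~Hall, whose approach is precisely this counting of coset representations, i.e.\ bounding index-$n$ subgroups by homomorphisms $G \to S_n$, of which a $k$-generated group admits at most $(n!)^k$. The only blemish is notational --- the action should read $xH \mapsto gxH$ rather than $x \mapsto gxH$ --- which does not affect the argument.
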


\section{Proof of Theorem~\ref{main-theorem}}

In this section we prove the main theorem.

\begin{proof}
Let $\pi$ be a countable dimensional irreducible
representation of $G$ having finite weight with respect to $(H, \chi)$. We prove the existence of
a subgroup $H'$ and $\chi': H' \rightarrow \mathbb C^{\times}$
such that the following hold
\begin{enumerate}
\item For all $g \in N_G(\sqrt{H'}) \setminus H'$ we have $(\chi')^g \neq \chi'$ on $(H')^g \cap H'$.
\item $V_{H'}(\chi') \neq 0$.
\end{enumerate}
Then by Proposition~\ref{reduction-to-normalizer} and Proposition~\ref{reduction}, we obtain $\pi \cong \ind_{H'}^G(\chi')$ and therefore $\pi$ is monomial. We divide the proof into several steps.
\bigskip

{\bf (a)} We firstly suppose that there exists $g \in N_G(\sqrt{H})
\setminus \sqrt{H}$ such that $\chi^g = \chi$ on $H \cap H^g$.
\bigskip

Then, by the definition of $\sqrt{H}$, the element $g$ has infinite order. Notice that, the groups $H^{g^i}$ all have equal finite index in $\sqrt{H}$. Since the group $\sqrt{H}$ is finitely generated, by Lemma~\ref{finite-finite.index}, we obtain that
\[
H^{g^k} = H \,\, \mathrm{for\,\, some}\,\, k \in \mathbb N,
\]
and
\[
H_0 = \cap_{i=1}^\infty H^{g^i}.
\]
has finite index in
$H$.
Therefore the infinite group generated by $g^k$ acts on irreducible representations of $H$ and $\chi = \chi^{g^{ki}}$ for all $i$ on $H_0$. By Lemma~\ref{non-zero-intertwiner}, we have
 \[
 I( \ind_{H_0}^H(\chi|_{H_0}) , \chi^{g^{ki}}) \neq 0 \,\,\forall \,\, i \geq 0.
 \] But $\ind_{H_0}^H(\chi|_{H_0})$ is finite dimensional and therefore $\chi = \chi^{g^{ki}}$ for some $i$. Also we have $H = H^{g^{ki}}$.
 Therefore there exists a character $\delta$ of group generated by $g^{ki}$, say $K,$ such that
 \[
 V_{H}(\chi) \cap V_{K}(\delta) \neq 0
 \]
 We apply Lemma~\ref{diamond-lemma} for this $H$, $\chi$, $K$, $\delta$ and obtain a character
$\chi_1 $ of group $H_1 = \langle H, g^{ki} \rangle$ such that $H \subsetneq H_1$, and $\chi_1|_{H} = \chi$. It can be easily seen that $V_{H_1}(\chi_1) \neq 0$ is finite dimensional. Thus now onwards we assume that $(H, \chi)$ are chosen so that pair $(H, \chi)$ is such that torsion free rank of $H$ is maximum with the property that $V_H(\chi) \neq 0$ is finite dimensional. From above discussion it also follows that maximal pair $(H, \chi)$ satisfies the following.

(*) The character $\chi$ of $H$ is such that for any $ g \in N_G(\sqrt{H})
\setminus \sqrt{H}$ we have $\chi^g \neq \chi$ on any finite index subgroup of $H$. \\

{\bf (b)} For the next step, if for this maximal $(H, \chi)$ there exists $g \in \sqrt{H} \setminus H$
such that $\chi^g = \chi$ on $H \cap H^g$, we will modify our pair $(H, \chi)$ as follows otherwise $(H, \chi)$ itself satisfies $\pi \cong \ind_H^G(\chi)$.

Let $H_0 = \cap_{g \in N_G(\sqrt{H})} gHg^{-1}$, $\chi_0 = \chi|_{H_0} $ and
\[ L = \{ g \in G \mid \chi_0^g = \chi_0 \,\, \mathrm{on} \,\, H_0^g \cap H_0 \}.
\]
By Proposition~\ref{remark-kutzko}, we have $L \subseteq N_G(\sqrt{H_0}) = N_G(\sqrt{H})$. By definition, the group $H_0$ is normal in $N_G(\sqrt{H})$. By (*), we get that $\chi_0^g \neq \chi_0$ for any $g \in N_G(\sqrt{H_0}) \setminus \sqrt{H_0}$. Hence it suffices to consider  \[
L = \{ g \in
\sqrt{H} = \sqrt{H_0} \mid \chi_0^g= \chi_0 \}.
\]
If $H_0 = L$, then the pair $(H_0, \chi_0)$ is the required pair. If not we have $H_0$ is a proper normal subgroup of $L$. As $L/H_0$ is nilpotent, it has a proper cyclic normal subgroup generated by, say, $gH_0$ for $g \in L \setminus H_0$. Let $S
= \langle g \rangle$ be the cyclic subgroup generated by $g$. Then $H_0S$ is a normal subgroup
of $L$ generated by $H_0$ and $g$. Now $g \in \sqrt{H}$ gives that there exists $t \in \mathbb N$ such that $g^t \in H_0$. As before there exists a character $\delta: S
\rightarrow \mathbb C^{\times}$ such that
\[
V_{H_0}(\chi_0) \cap V_S(\delta) \neq 0
\] Then by
Lemma~\ref{diamond-lemma} for these $H_0$, $\chi_0$, $S$, $\delta$, we get that there exists a character
$\chi_1$ of $H_0 \subsetneq H_1 = H_0S $ that extends $\chi$ and $V_{H_1}(\chi_1) \neq 0$.

Let $L_1 = \{ g \in L \mid (\chi_1)^g = \chi_1\}$. Now
\[
H_0 \triangleleft H_1 \trianglelefteq L_1 \subseteq L \subseteq
\sqrt{H}.
\]
If $L_1 \neq H_1$, then we obtain a group $H_2$ containing $H_1$ as a proper normal subgroup and its character $\chi_2$ that extends $\chi_1$ and therefore satisfying $V_{H_2}(\chi_2) \neq 0$. We note that $H_0$ has finite index in $\sqrt{H}$.
Therefore
continuing this way, there exists a a subgroup $H_k$ of $\sqrt{H}$
and a character $\chi_k$ of $H_k$ such that $H_k = L_k$, $(\chi_k)|_{H_0} = \chi_0$, $V_{H_k}(\chi_k) \neq 0$,
and $(\chi_k)^g \neq \chi_k$ on $H_k^g \cap H_k$ for any $g \in N_G(\sqrt{H_k}) \setminus H_k$. The last assertion follows by choice of $H_k$ and (*). \\



Conversely, Let $\pi \cong \mathrm{Ind}_H^G(\chi)$ be a monomial irreducible representation
of $G$ acting on representation space $V$. We prove that
$V_H(\chi)$ is in fact a one
dimensional subspace of $V$.

Let $\{g_i \mid i \in \mathbb I \}$ be a set of double coset
representatives of $H$ in $G$. For each $i \in \mathbb I $, define
functions $f_i$ such that $f_i(g_j) = \delta_{i,j}$ and then
extended to the whole group $G$ so that $f_i(hg_j) =
\chi(h)f_i(g_j)$ for all $j$. From the definition of
$\mathrm{Ind}_H^G(\chi)$, it is clear that every $f \in V$ can be
written as linear combination of $f_i$'s. The representation $V$
is irreducible, therefore by Schur's lemma and Proposition~\ref{remark-kutzko} for any $g \notin
H$, there exists $h \in H \cap H^g$ such that $\chi(h) \neq
\chi^g(h)$.  Let $f \in V$, then
\[
\pi(h)f(g) = f(ghg^{-1}g) = \chi^g(h) f(g) \neq \chi(h)f(g),
\]
therefore $f \in V_H(\chi)$ if and only if $f$ is non-zero only on
trivial coset representative of $H$ in $G$ and here it is
determined by its value on identity element of $H$. Therefore
$V_H(\chi)$ is one dimensional.
\end{proof}
As a corollary, we obtain the following.
\begin{corollary}
	Every finite dimensional irreducible representation of a nilpotent group $G$ is monomial.
\end{corollary}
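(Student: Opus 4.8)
The plan is to deduce this directly from Theorem~\ref{main-theorem}, which asserts that for a finitely generated nilpotent group an irreducible countable dimensional representation is monomial precisely when it has finite weight. A finite dimensional space is in particular countable dimensional, so the dimension hypothesis of the theorem is automatic, and the entire content of the corollary collapses to exhibiting a single finite weight pair $(H,\chi)$ for the given irreducible $(\pi,V)$.

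The point is that such a pair is essentially free once $\dim V<\infty$. For any subgroup $H$ and any character $\chi$ the weight space $V_H(\chi)$ is a linear subspace of the finite dimensional space $V$, hence automatically finite dimensional; the only thing to arrange is that it be nonzero for some choice. First I would take $H$ to be the trivial subgroup with $\chi$ trivial, so that $V_H(\chi)=V\neq 0$. A more intrinsic witness is $H=Z(G)$: by Schur's lemma the center acts on the irreducible finite dimensional $V$ through a scalar character $\omega$, whence again $V_{Z(G)}(\omega)=V\neq 0$. Either choice produces a nonzero finite dimensional weight space, so $\pi$ has finite weight, and Theorem~\ref{main-theorem} immediately yields that $\pi$ is monomial.

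The only genuine point of care is the standing finite generation hypothesis: Theorem~\ref{main-theorem} is stated for finitely generated $G$, so the argument above settles the corollary for finitely generated nilpotent groups. To reach an arbitrary nilpotent group $G$ I would instead run the classical Clifford-theoretic induction on $\dim V$, which uses no finiteness of generation. After replacing $G$ by $G/\ker\pi$ so that $\pi$ is faithful, choose a maximal abelian normal subgroup $A\trianglelefteq G$; using only that every nontrivial normal subgroup of a nilpotent group meets the center, one gets $C_G(A)=A$. By Clifford's theorem $V|_A$ is semisimple and splits into finitely many $A$-characters permuted transitively by $G$, so $V\cong \mathrm{Ind}_H^G(U)$ where $H=\mathrm{Stab}_G(\chi)$ for one occurring character $\chi$ and $U$ is an irreducible $H$-representation. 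If $H=G$ then $A$ acts by the scalar $\chi$, and self-centralizing maximality forces $A=Z(G)=C_G(A)=G$, i.e.\ $G$ abelian and $\dim V=1$; hence for $\dim V>1$ one has $H\neq G$, so $[G:H]>1$ is finite and $\dim U<\dim V$. Induction makes $U$ monomial, and transitivity of induction finishes the argument.

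I do not expect a real obstacle: in the finitely generated case the corollary is immediate from the main theorem, the finite weight condition being witnessed trivially. The one subtlety worth flagging is precisely this dichotomy — the slick deduction from Theorem~\ref{main-theorem} carries the finite generation assumption, whereas removing it requires the self-contained inductive argument sketched above, whose only nontrivial input is the self-centralizing property $C_G(A)=A$ of a maximal abelian normal subgroup.
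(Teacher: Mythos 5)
Your proposal is correct, and its core matches the paper's (implicit) argument exactly: the paper derives this corollary from Theorem~\ref{main-theorem} with no further comment, the point being precisely the one you make --- when $\dim V<\infty$ every weight space $V_H(\chi)$ is automatically finite dimensional, and a nonzero one is witnessed trivially (your choices of the trivial subgroup, or of $Z(G)$ acting by a scalar via Schur's lemma, both work, since Schur's lemma is unproblematic in finite dimension over $\mathbb{C}$). You are also right to flag the finite generation mismatch: the corollary as stated omits ``finitely generated,'' while Theorem~\ref{main-theorem} assumes it. The paper does not resolve this internally; it simply notes that a proof also appears in Brown~\cite[Lemma 1]{MR0352324}. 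Your self-contained Clifford-theoretic induction fills that gap correctly: Clifford semisimplicity of $V|_A$ for $A$ a maximal abelian normal subgroup needs only $\dim V<\infty$, the self-centralizing property $C_G(A)=A$ follows from the standard fact that nontrivial normal subgroups of a nilpotent group meet the center (valid without finite generation), and in the stabilizer-equals-$G$ case your chain $A\subseteq Z(G_\rho)\subseteq C_{G_\rho}(A)=A$ together with $G_\rho=C_{G_\rho}(Z(G_\rho))$ correctly forces $G_\rho$ abelian and $\dim V=1$, so the induction on dimension terminates. The trade-off is clear: the deduction from Theorem~\ref{main-theorem} is a one-liner but inherits the finite generation hypothesis, whereas your inductive argument is longer but proves the corollary as literally stated, and is essentially the classical proof the paper outsources to Brown.
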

A proof of this also appears in \cite[Lemma 1]{MR0352324}.

\section{Proof of Theorem~\ref{rank-one}}
\label{Sec: rank-one}
%
%
%
%
%
%
%

From the definition of $\mathbb H(s_1, s_2, \ldots, s_n),$
it is clear that $[G,G] = \{({\bf 0}, {\bf 0}, s_1z) \mid z \in
\mathbb Z \}$,  where {\bf 0} corresponds to the zero element of
abelian group $\mathbb Z^n$.

Now we prove that $(1)$ implies $(2)$. We have $\chi|_{[G,G]}$ is
of order $N_1$ implies that the set $ K = \{({\bf 0}, {\bf 0},
N_1s_1z) \mid z \in \mathbb Z \}$ is in the kernel of $\rho$.
Hence $\rho$ can be considered as a representation of the group
$G' = G/K$. The result that $V$ is finite dimensional is proved if
we prove existence of an abelian normal subgroup of $G'$ of finite
index.

Consider $A = \{ (u, v, z) \mid u \in \oplus_{i = 1}^n(N_i \mathbb
Z), v \in \mathbb Z^n, z \in \mathbb Z \}$. Then $A' = A/K$ is a
normal abelian subgroup of $G'$ of index $N_1N_2\ldots N_n$. Hence
dimension of $V$ is less than equal to  $N_1N_2\ldots N_n$. Let
$\chi' = \chi|_{[G,G]}$ and as $K$ is contained in the kernel of
$\chi'$ we may consider $\chi'$ as a character of $[G,G]/K$. Let
$\delta$ be a character of $A'$ such that

\begin{enumerate}

\item $\delta|_{[G,G]/K} = \chi'$.

\item $<\rho|_{A'}, \delta> \neq 0$.

\end{enumerate}

Such a character $\delta$ exists because $A'$ is an abelian
subgroup of $G'$.

\begin{lemma}

The representation $\mathrm{Ind}_{A'}^{G'} (\delta)$ is an
irreducible representation of $G'$.

\end{lemma}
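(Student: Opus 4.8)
The plan is to deduce irreducibility of $\mathrm{Ind}_{A'}^{G'}(\delta)$ from Proposition~\ref{reduction-to-normalizer} together with Theorem~\ref{thm:belo-gorch}. Since $A'$ is a \emph{normal} subgroup of finite index $N_1\cdots N_n$ in $G'$, every element of $G'$ has a power lying in $A'$, so the only radical subgroup of $G'$ containing $A'$ is $G'$ itself; that is, $\sqrt{A'}=G'$ and hence $N_{G'}(\sqrt{A'})=G'$. Moreover $(A')^g\cap A'=A'$ for every $g\in G'$. Thus the hypothesis of Proposition~\ref{reduction-to-normalizer} reduces to the single assertion that the $G'$-conjugation stabilizer of $\delta$ inside $A'$ is exactly $A'$, i.e. $\delta^g\neq\delta$ on $A'$ for every $g\in G'\setminus A'$. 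Once this is shown, Proposition~\ref{reduction-to-normalizer} yields $\mathrm{End}_{G'}(\mathrm{Ind}_{A'}^{G'}(\delta))\cong\mathbb C$, and since $\delta$ is one-dimensional (hence irreducible) and $G'$ is finitely generated nilpotent, Theorem~\ref{thm:belo-gorch} upgrades this to genuine irreducibility. This last step is essential: for non-semisimple finite dimensional representations $\mathrm{End}\cong\mathbb C$ does not by itself force irreducibility, which is precisely the subtlety Theorem~\ref{thm:belo-gorch} resolves.

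To compute the stabilizer I would first record the conjugation action on $A'$. A direct calculation in $\mathbb H(s_1,\dots,s_n)$ gives, for $g=(p,q,r)$ and $a=(u,v,z)$,
\[
g a g^{-1}=\Big(u,\;v,\;z+\sum_{i=1}^n s_i\,(p_i v_i-u_i q_i)\Big),
\]
which is independent of $r$. Writing $\psi$ for the restriction of $\delta$ to the (finite cyclic) image of the centre in $G'$ and $\zeta=\psi\big((0,0,1)K\big)$, this shows $\delta^g(a)=\delta(a)\,\zeta^{\sum_i s_i(p_i v_i-u_i q_i)}$. Hence $\delta^g=\delta$ on $A'$ precisely when $\zeta^{\sum_i s_i(p_i v_i-u_i q_i)}=1$ for all admissible $a$, i.e. for all $v\in\mathbb Z^n$ and all $u\in\bigoplus_i N_i\mathbb Z$.

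The two families of exponents vary independently, so this splits into a condition coming from the $v$'s and one from the $u$'s. For the $v$-part one needs $\zeta^{s_i p_i}=1$ for each $i$. Here I would use the first defining condition on $\delta$: since $[G,G]/K$ is generated by $(0,0,s_1)K$ and $\chi'=\delta|_{[G,G]/K}$ has order $N_1$, the element $\zeta^{s_1}$ has order $N_1$; because $s_1\mid s_i$ (this is what forces $[G,G]=\{(0,0,s_1z)\}$), writing $s_i=s_1c_i$ gives that $\zeta^{s_i}=(\zeta^{s_1})^{c_i}$ has order $N_1/\gcd(N_1,c_i)$, which is exactly $N_i$ by the defining property of $N_i$. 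Thus $\zeta^{s_ip_i}=1\iff N_i\mid p_i$. For the $u$-part, $u_i\in N_i\mathbb Z$ together with $\mathrm{ord}(\zeta^{s_i})=N_i$ force $\zeta^{s_iu_iq_i}=1$ automatically, so the $u$'s impose no further restriction. Combining these, $\delta^g=\delta$ on $A'$ holds if and only if $N_i\mid p_i$ for all $i$, i.e. if and only if $g\in A'$. This is exactly the required stabilizer statement, and the lemma then follows via Proposition~\ref{reduction-to-normalizer} and Theorem~\ref{thm:belo-gorch}.

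The substantive point, and the only place where real work happens, is the elementary number theory in the third paragraph: one must verify that the order of $\zeta^{s_i}$ equals $N_i$ (and not some proper divisor) and that the contribution of $u\in\bigoplus_i N_i\mathbb Z$ genuinely drops out. The delicate feature is that $\delta$ is a priori constrained only on $[G,G]/K$ by its first defining property, yet this single constraint---through $s_1\mid s_i$ and the definition of the $N_i$---pins down $\mathrm{ord}(\zeta^{s_i})$ exactly. That is precisely what makes the stabilizer collapse to $A'$ and to no larger subgroup, which is the whole content needed to apply Proposition~\ref{reduction-to-normalizer}.
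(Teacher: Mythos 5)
Your proof is correct and takes essentially the same route as the paper: both reduce, via Proposition~\ref{reduction-to-normalizer}, to showing $\delta^g \neq \delta$ on $A'$ for every $g \in G' \setminus A'$, and both verify this through the commutator computation $[g,a] = (\mathbf{0},\mathbf{0}, \sum_i s_i(p_i v_i - u_i q_i))$ combined with the fact that $\chi'$ has order $N_1$ and the defining minimality of the $N_i$. Your write-up is marginally more thorough than the paper's --- you compute the full stabilizer (both directions, including checking that the $u \in \oplus_i N_i\mathbb{Z}$ contribution drops out) rather than exhibiting one witness $a$ per nontrivial coset, and you explicitly invoke Theorem~\ref{thm:belo-gorch} to pass from $\mathrm{End}_{G'}(\mathrm{Ind}_{A'}^{G'}(\delta)) \cong \mathbb{C}$ to genuine irreducibility, a step the paper leaves implicit --- but these are refinements of the identical argument, not a different approach.
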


\begin{proof} To prove this, we use Proposition~\ref{reduction-to-normalizer}.
We consider the set \[
      \{((\alpha_1, \alpha_2, \ldots, \alpha_n), {\bf 0}, 0) \mid 1 \leq \alpha_i < N_i \,\, \mathrm{for \, all} \,\, i \} \subset G
      \]
as the set of representatives of $G'/A'$. We claim that $\delta^g
\neq \delta$ for all $g \in G'/A'$. This is equivalent to show
that for any $g \in G'/A'$, there exists $a \in A'$ such that
$\delta(gag^{-1}a^{-1}) \neq 1$.

Suppose $g = (\alpha_1, \alpha_2, \ldots, \alpha_n), {\bf 0}, 0)$
is such that $j$ is the smallest integer with $\alpha_j \neq 0$.
We take $a = ({\bf 0}, v, 0) \in G$ where  $v \in \mathbb Z^n$
such that $j$th coordinate is one and all other coordinates are
zero. Then $gag^{-1}a^{-1} = ({\bf 0}, {\bf 0}, \alpha_js_j ) $.
But then by the choice, $\alpha_j s_j < N_j s_j$ and therefore
$\alpha_j s_j$ is not multiple of $N_1$, this implies that
$\delta(gag^{-1}a^{-1}) = \chi_{[G,G]}(gag^{-1}a^{-1}) \neq 1$.

\end{proof}

Therefore $\mathrm{Ind}_{A'}^{G'} (\delta)$ is irreducible. Hence
by Lemma~\ref{reduction}, we obtain that $\rho \cong
\mathrm{Ind}_{A'}^{G'} (\delta)$.  Therefore  dimension of $\rho$
is equal to $N_1 N_2 \cdots N_n$.

To prove that $(2)$ implies $(1)$, let $\dim(V) = N_1 N_2\ldots N_n$ such
that $N_i$ are defined as above. Since $\dim(V) < \infty,$ $\rho$
is monomial. Hence each $\rho(g)$ is a monomial matrix and it
follows that there exists a large positive integer $m$ such that
$\rho(g)^m$ is diagonal for all $g \in G.$ Let $a, b \in G$ be
such that $[a, b] = ({\bf{0}}, {\bf{0}}, s_1) \in [G, G].$ Since
$[a^m, b^m] = [a, b]^{m^2}$ (see proof of Lemma \ref{finite}
below), we have $$I = \rho [a^m, b^m] = \chi [a^m, b^m] = \chi
({\bf{0}}, {\bf{0}}, m^2 s_1).$$ Hence $\chi$ has order less than
or equal to $m^2.$

We prove that order of $\chi$ is $N_1$. Suppose order of $\chi$ is
$d_1$. But then by the first part, we have $\dim(V) = d_1d_2\ldots
d_n$. Here, definition of $d_j$ are similar to that of $N_j$ in
the statement of theorem. Since $N_j a_j = b_j N_1,$ where $a_j,
b_j$ are positive integers, it follows from the definition of
$N_j$ that $N_j$ and $b_j$ do not have common factors and so $N_j$
divide $N_1.$ Similarly, $d_j$ divide $d_1.$ Notice that $\dim(V)
= N_1 N_2 \ldots N_n = d_1 d_2 \ldots d_n.$ So the prime factors
appearing in $N_1$ and $d_1$ will have to be the same. Let $N_1 =
p_1^{e_1}p_2^{e_2}\ldots p_k^{e_k}$ and $d =
p_1^{f_1}p_2^{f_2}\ldots p_k^{f_k}$ with $p_i < p_{i+1}$. Suppose
$j$ is smallest integer such that $e_j \neq f_j$. Let $e_j <
f_j$.Then by definition of $d_i$ and $N_i$, we have the power of
$p_j$ appearing in each $d_i$ is less than or equal to that of
$N_j$ with strict inequality for $d_1$ and $N_1$. But then this
contradicts the fact that $\dim(V) =  N_1 N_2 \ldots N_n = d_1d_2
\ldots d_n$. \\

Closely following the proof above we obtain the following
corollary, analogues to Theorem 4 in \cite{MR3201567}.

\begin{corollary}

Let $G = \mathbb H(1, 1, \ldots, 1),$ and let $V = \ind_{H(V)}^G
(\chi)$ be an irreducible representation of $G$ where $\chi: H(V)
\to \mathbb C^{\times}$ is a character. Let $\chi_C$ be the
restriction of $\chi$ to the center $\{ ({\bf {0}}, {\bf {0}}, z):
z \in \mathbb Z \}.$ Then the following are equivalent.

\bigskip

\begin{enumerate}

\item $V$ is finite dimensional and $\dim V = N^n.$

\item $\chi_C$ has order $N < \infty$ in $\mathbb C^{\times}.$

\item $[G : H(V)] = N^n < \infty .$

\item $H(V) = (N\mathbb Z)^n \times \mathbb Z^n \times \mathbb Z.$

\end{enumerate}
\end{corollary}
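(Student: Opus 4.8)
\emph{Proof proposal.} The plan is to treat this as the specialization $s_1=\cdots=s_n=1$ of Theorem~\ref{rank-one} and to append the two bookkeeping statements about the index and about the explicit inducing subgroup. First I would record the key simplification: when every $s_i=1$ the commutator subgroup $[G,G]=\{({\bf 0},{\bf 0},z)\mid z\in\mathbb Z\}$ coincides with the centre, so $\chi_C=\chi|_{[G,G]}$ and the two restrictions occurring in the hypotheses are literally the same character. Moreover, in the notation of Theorem~\ref{rank-one} each $N_j$ is the least positive integer such that $N_j s_j/s_1=N_j$ is a multiple of $N_1$, whence $N_j=N_1$ for every $j$. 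Writing $N=N_1$, Theorem~\ref{rank-one} then already delivers the equivalence $(1)\Leftrightarrow(2)$ together with the value $\dim V=N_1N_2\cdots N_n=N^n$.

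To bring in $(3)$ I would use that an induced representation from a one--dimensional character has dimension equal to the number of right cosets: by Definition~\ref{defn:induced} an element of $\ind_{H(V)}^G(\chi)$ is determined by its (finitely supported) values on a set of right--coset representatives, so $\dim V=[G:H(V)]$ as an equality in $\mathbb N\cup\{\infty\}$. Hence $V$ is finite dimensional of dimension $N^n$ precisely when $[G:H(V)]=N^n<\infty$, which is exactly the equivalence $(1)\Leftrightarrow(3)$.

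For the explicit subgroup I would close the cycle through $(4)$. The implication $(4)\Rightarrow(3)$ is the direct index computation $[G:(N\mathbb Z)^n\times\mathbb Z^n\times\mathbb Z]=[\mathbb Z^n:(N\mathbb Z)^n]=N^n$, after first checking that this set is indeed a subgroup (using $\sum u_iv_i'\in\mathbb Z$ for closure and $-u\in(N\mathbb Z)^n$ for inverses). For the converse I would follow the construction in the proof of Theorem~\ref{rank-one} verbatim with $s_i=1$, $N_i=N$: set $A=(N\mathbb Z)^n\times\mathbb Z^n\times\mathbb Z$ and $K=\{({\bf 0},{\bf 0},Nz)\}$, check $[A,A]\subseteq K\subseteq\ker\chi$ so that a character $\delta$ of $A$ extending $\chi_C$ with $\langle \rho|_A,\delta\rangle\neq 0$ exists, verify $\delta^g\neq\delta$ for every nontrivial coset representative $g$ of $A$ in $G$ by the same choice $a=({\bf 0},v,0)$ with $v$ the $j$--th standard vector (so that $gag^{-1}a^{-1}=({\bf 0},{\bf 0},\alpha_j)$ with $0<\alpha_j<N$, giving $\delta(gag^{-1}a^{-1})=\chi_C({\bf 0},{\bf 0},\alpha_j)\neq 1$), and conclude by Proposition~\ref{reduction-to-normalizer} that $\ind_A^G(\delta)$ is irreducible. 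Since $V_A(\delta)\neq 0$, Proposition~\ref{reduction} then yields $V\cong\ind_A^G(\delta)$, realizing $V$ as induced from the subgroup in $(4)$.

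The step I expect to be delicate is exactly this identification of $H(V)$ with $(N\mathbb Z)^n\times\mathbb Z^n\times\mathbb Z$ in $(4)$: the inducing datum is not unique (interchanging the roles of the $u$-- and $v$--coordinates produces another polarization of the same index $N^n$), so $(4)$ must be read as the assertion that $V$ admits a model induced from this \emph{canonical} subgroup, not that every realizing $H(V)$ equals it. The honest content to verify is twofold: that the requirement that $\chi$ be a genuine character forces $\chi$ to annihilate $[H(V),H(V)]$, which together with $\mathrm{ord}(\chi_C)=N$ constrains the $u$--part of $H(V)$ to lie in $(N\mathbb Z)^n$; and that the index count $[G:H(V)]=N^n$ then pins the $u$--part down to all of $(N\mathbb Z)^n$ once $H(V)$ is taken to contain the $v$--part and the centre. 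Everything else is the routine assembly of the four implications into a single cycle.
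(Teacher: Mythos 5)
Your proposal is correct and is essentially the paper's own argument: the paper gives no separate proof, saying only that the corollary follows by ``closely following the proof'' of Theorem~\ref{rank-one}, and that is exactly your specialization $s_1=\cdots=s_n=1$ (so $N_j=N_1=N$ and $[G,G]=Z(G)$), supplemented by the standard identity $\dim \ind_{H(V)}^G(\chi)=[G:H(V)]$ for a character and the direct index computation for $(N\mathbb Z)^n\times\mathbb Z^n\times\mathbb Z$. Your caveat about $(4)$ is well taken rather than a defect: the inducing subgroup is genuinely not unique (e.g.\ $\mathbb Z^n\times(N\mathbb Z)^n\times\mathbb Z$ carries a character with the same central character and induces an isomorphic irreducible representation), so $(4)$ must be read, as you do, as the existence of a model induced from that canonical subgroup --- a subtlety the paper leaves implicit.
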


\section{Proof of Theorem~\ref{general-case}}
\label{Sec: general}
In this section, we generalize some of the results of the last
section to all finitely generated two step nilpotent
groups.


%

\begin{lemma}\label{finite}
Let $G$ be a finitely generated, two step nilpotent
group such that $[G, G]$ is finite cyclic. Then $G/Z(G)$ is
finite.
\end{lemma}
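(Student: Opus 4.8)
The plan is to exploit the commutator pairing that $G$ induces on $G/Z(G)$. Since $G$ is two step nilpotent we have $[G,G] \subseteq Z(G)$, so $\bar{G} := G/Z(G)$ is abelian; being a quotient of the finitely generated group $G$, it is a finitely generated abelian group. First I would record the commutator identities valid in any two step nilpotent group: because every commutator is central, $[xy,z] = [x,z][y,z]$ and $[x,yz] = [x,y][x,z]$ for all $x,y,z \in G$, and iterating these gives $[x^m, y^n] = [x,y]^{mn}$; in particular $[a^m, b^m] = [a,b]^{m^2}$, the identity quoted elsewhere in this argument.

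Next I would define the map $\beta \colon \bar{G} \times \bar{G} \to [G,G]$ by $\beta(\bar{x}, \bar{y}) = [x,y]$. It is well defined because replacing $x$ by $xz$ with $z \in Z(G)$ leaves $[x,y]$ unchanged, and the identities above show that $\beta$ is bi-additive in each variable and alternating. The crucial observation is that $\beta$ is nondegenerate in the following sense: if $\bar{x}$ pairs trivially with every $\bar{y}$, then $[x,y]=1$ for all $y \in G$, so $x \in Z(G)$, i.e. $\bar{x}$ is trivial in $\bar{G}$.

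Finally I would use the finiteness of $[G,G]$. Writing $N = |[G,G]|$ and viewing $[G,G]$ additively as a group of exponent dividing $N$, bi-additivity gives $\beta(N\bar{x}, \bar{y}) = N\,\beta(\bar{x}, \bar{y}) = 0$ for every $\bar{y}$; by nondegeneracy $N\bar{x}$ is trivial, so $\bar{G}$ has exponent dividing $N$. A finitely generated abelian group of finite exponent is finite, and therefore $G/Z(G)$ is finite, as desired.

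The only genuinely delicate point is the verification of bi-additivity and the alternating property of $\beta$, which rests entirely on the centrality of commutators in a two step nilpotent group; once these identities are in hand, both nondegeneracy and the exponent bound are immediate, and the statement reduces to the standard fact that a finitely generated abelian group of bounded exponent is finite. I expect the bookkeeping of these commutator identities to be the main (though routine) obstacle, since the rest of the argument is structural.
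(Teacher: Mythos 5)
Your proof is correct and is essentially the paper's own argument in different packaging: your nondegenerate pairing $\beta$ together with $\beta(N\bar{x},\bar{y})=\beta(\bar{x},\bar{y})^{N}=1$ is precisely the paper's computation $[x^{k},y]=[x,y]^{k}=e$ showing $x^{k}\in Z(G)$ for $k=|[G,G]|$, followed by the same appeal to the fact that a finitely generated abelian group of bounded exponent is finite. The pairing language (well-definedness, bi-additivity, nondegeneracy) is a tidy reformulation but introduces no new idea beyond the commutator identity $[x^{\ell},y]=[x,y]^{\ell}$ that the paper proves by induction.
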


\begin{proof}
Let $k$ be the order of $[G, G]$ we show that $x^k \in Z(G)$ for
all $x \in G.$ First notice that $[x^\ell, y] = [x, y]^\ell.$ This
is easily proved by induction. For, $$[x^{\ell +1}, y] = x^{\ell
+1} y x^{-\ell -1} y^{-1} = x [x^\ell, y] y x^{-1} y^{-1} =
[x^\ell, y] [x, y].$$ It follows that $[x^k, y] = e$ for all $y
\in G,$ and so $x^k \in Z(G).$ Since $G/Z(G)$ is a finitely
generated abelian group and every element in $G/Z(G)$ has finite
order it follows that $G/Z(G)$ is finite.

\end{proof}

\begin{thm}\label{two-step-general-result}
Let $G$ be a finitely generated two step nilpotent group
$\rho$ be an irreducible finite dimensional representation
(and so monomial) of $G.$ Let $G_\rho = G/ Ker \rho,$ then $dim
\rho = \sqrt {|G_\rho / Z(G_\rho)|}.$
\end{thm}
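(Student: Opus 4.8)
The plan is to realize $\rho$ explicitly as a representation induced from a maximal abelian subgroup, and to read off its dimension from a non-degenerate alternating form on $G_\rho/Z(G_\rho)$. Write $\bar G = G_\rho$ and $Z = Z(\bar G)$, and regard $\rho$ as a \emph{faithful} finite-dimensional irreducible representation of $\bar G$. Since $\rho$ is irreducible and finite dimensional, Schur's lemma shows that $Z$ acts through a scalar character $\omega \colon Z \to \mathbb C^\times$, and faithfulness of $\rho$ forces $\omega$ to be injective. Because $G$ is two step, $[\bar G,\bar G]\subseteq Z$. The first thing I would check is that $[\bar G, \bar G]$ is finite: as $\rho$ is monomial, every $\rho(g)$ is a monomial matrix, so $\rho(g)^m$ is diagonal for a single fixed $m$; the identity $[a^m,b^m]=[a,b]^{m^2}$ (from the proof of Lemma~\ref{finite}) then gives $\rho([a,b])^{m^2}=I$, so $\omega$ has finite order on each generator of the finitely generated group $[\bar G,\bar G]$. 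Hence $\omega([\bar G,\bar G])$ is a finite subgroup of $\mathbb C^\times$ and, $\omega$ being injective, $[\bar G,\bar G]$ is finite cyclic. Lemma~\ref{finite} then yields that $\bar G/Z$ is finite.

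Next I would introduce the commutator pairing. Since $[\bar G,\bar G]\subseteq Z$ is central, the map $B(\bar x,\bar y)=\omega([x,y])$ is well defined on $V:=\bar G/Z$, and the standard two-step commutator identities make it bi-multiplicative and alternating. It is non-degenerate: if $B(\bar x,\cdot)\equiv 1$ then $\omega([x,y])=1$ for all $y$, so $[x,y]=e$ by injectivity of $\omega$, whence $x\in Z$. Two further remarks organize the counting. First, two-step nilpotency means $gag^{-1}=a\,[g,a]\in aZ$ for $a\in \bar G$, so conjugation acts trivially on $V$ and every subgroup of $\bar G$ containing $Z$ is normal; moreover a subgroup $A\supseteq Z$ is abelian precisely when $A/Z$ is isotropic for $B$. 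Second, since $\bar G/Z$ is finite, every $x\in\bar G$ satisfies $x^{m}\in Z\subseteq A$ for suitable $m$, so the isolator satisfies $\sqrt A=\bar G$ for every such $A$. I would then choose $A\supseteq Z$ with $L:=A/Z$ a maximal isotropic subgroup. A standard argument (any $v\notin L$ with $B(v,L)=1$ would, using $B(v,v)=1$, enlarge $L$) shows $L=L^{\perp}$, and non-degeneracy gives $|L^{\perp}|=|V|/|L|$; hence $|V|=|L|^{2}$ and $[\bar G:A]=|V/L|=\sqrt{|V|}$.

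I would then exhibit $\rho$ as $\mathrm{Ind}_A^{\bar G}(\lambda)$. As $\rho(A)$ is a commuting family of operators on a finite-dimensional complex space it has a common eigenvector, giving a character $\lambda\colon A\to\mathbb C^\times$ with $V_A(\lambda)\neq 0$ and $\lambda|_Z=\omega$. To see that $\mathrm{Ind}_A^{\bar G}(\lambda)$ is irreducible I would apply Proposition~\ref{reduction-to-normalizer}: since $\sqrt A=\bar G$ and $A$ is normal, it suffices to show $\lambda^{g}\neq\lambda$ on $A$ for every $g\in\bar G\setminus A$. For such $g$ we have $\bar g\notin L=L^{\perp}$, so there is $a\in A$ with $B(\bar g,\bar a)\neq 1$; since $gag^{-1}=[g,a]\,a$ with $[g,a]\in Z$, this reads $\lambda^{g}(a)=\omega([g,a])\,\lambda(a)\neq\lambda(a)$. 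Proposition~\ref{reduction-to-normalizer} then gives $\mathrm{End}_{\bar G}(\mathrm{Ind}_A^{\bar G}(\lambda))\cong\mathbb C$, and Theorem~\ref{thm:belo-gorch} upgrades this to irreducibility. Finally, since $V_A(\lambda)\neq 0$, Proposition~\ref{reduction} yields $\rho\cong\mathrm{Ind}_A^{\bar G}(\lambda)$, so $\dim\rho=[\bar G:A]=\sqrt{|V|}=\sqrt{|G_\rho/Z(G_\rho)|}$.

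I expect the main obstacle to be the irreducibility verification rather than the algebra of the form. Concretely, the delicate point is confirming that the hypotheses of Proposition~\ref{reduction-to-normalizer} reduce \emph{exactly} to the self-perpendicularity $L=L^{\perp}$ of the Lagrangian $A/Z$ — which is where the identifications $\sqrt A=\bar G$ and the normality of $A$ (both consequences of two-step nilpotency together with finiteness of $\bar G/Z$) are essential. The remaining points to pin down carefully are the clean counting $|V|=|L|^{2}$ for a non-degenerate alternating pairing on a finite abelian group, the selection of $\lambda$ via a common eigenvector of the commuting family $\rho(A)$, and the computation $[\bar G:A]=\sqrt{|V|}$.
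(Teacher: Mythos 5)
Your proof is correct and follows the paper's overall strategy: pass to the faithful quotient $G_\rho$, show $[G_\rho,G_\rho]$ is finite cyclic via the monomial-matrix argument and the identity $[a^m,b^m]=[a,b]^{m^2}$, invoke Lemma~\ref{finite} to get $G_\rho/Z(G_\rho)$ finite, set up the non-degenerate antisymmetric commutator pairing on $G_\rho/Z(G_\rho)$, induce a character from a subgroup $A\supseteq Z(G_\rho)$ whose image is maximal isotropic, and conclude via Propositions~\ref{reduction-to-normalizer} and~\ref{reduction}. Where you genuinely diverge is the dimension count: the paper quotes Lemma~\ref{structure} (from \cite{MR2742735}) to obtain $G_\rho/Z(G_\rho)\cong A\times\widehat{A}\cong \mathbb Z_{p_1}^2\times\cdots\times\mathbb Z_{p_k}^2$ and reads the index of the maximal isotropic subgroup off that structure theorem, whereas you prove the self-duality $L=L^\perp$ directly — using that the commutator pairing is alternating ($B(v,v)=\omega([v,v])=1$, which is automatic and slightly stronger than the antisymmetry the paper records) together with the identification $V\cong\widehat{V}$ from non-degeneracy, giving $|L^\perp|=|V|/|L|$ and hence $|V|=|L|^2$ — so your count is elementary and self-contained. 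Your write-up also tightens two points the paper leaves implicit: you choose $\lambda$ via a common eigenvector of the commuting family $\rho(A)$, which guarantees $V_A(\lambda)\neq 0$ as Proposition~\ref{reduction} actually requires (the paper only says $\chi$ ``extends'' to $H_\rho$), and you explicitly route irreducibility through Theorem~\ref{thm:belo-gorch} after verifying the hypothesis of Proposition~\ref{reduction-to-normalizer} (using $\sqrt{A}=G_\rho$, which you correctly justify from finiteness of $G_\rho/Z(G_\rho)$), whereas the paper's text cites Proposition~\ref{reduction-to-normalizer} alone even though, by its own remark, that proposition yields only Schur irreducibility. The trade-off: the paper's appeal to Lemma~\ref{structure} delivers the finer structural fact $G_\rho/Z(G_\rho)\cong\prod_i\mathbb Z_{p_i}^2$, which it reuses afterwards (e.g., to show the order of $\chi$ divides $\dim\rho$), while your Lagrangian argument is shorter and avoids the external reference.
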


Proof of this theorem requires some lemmas.

\begin{lemma}\label{structure}
Let $\mathcal{G}$ be a finite abelian group and $f: \mathcal{G}
\times \mathcal{G} \to \mathbb C^{\times}$ be an anti-symmetric
non-degenerate co-cycle ($f \in H^2 (\mathcal{G} , \mathbb
C^{\times})).$ Then $\mathcal{G} = A \times \widehat{A}$ where $A$
is finite abelian and $\widehat{A} $ is the group of characters of
$A.$ Moreover, any maximal abelian normal subgroup of
$\mathcal{G}$ has index $|A|$ in $\mathcal{G}.$
\end{lemma}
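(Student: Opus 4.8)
The plan is to treat $f$ as a non-degenerate alternating bi-multiplicative pairing $\mathcal{G} \times \mathcal{G} \to \mathbb{C}^\times$ (the commutator pairing attached to the class; anti-symmetry means $f(x,y) = f(y,x)^{-1}$ and $f(x,x) = 1$) and to produce a symplectic decomposition by the standard hyperbolic-pair induction. First I would reduce to the case of a $p$-group. Writing $\mathcal{G} = \bigoplus_p \mathcal{G}_p$ for its primary decomposition, note that for $x \in \mathcal{G}_p$ and $y \in \mathcal{G}_q$ with $p \neq q$ the value $f(x,y)$ satisfies $f(x,y)^{|x|} = f(x^{|x|}, y) = 1$ and likewise $f(x,y)^{|y|} = 1$, so its order divides $\gcd(|x|,|y|) = 1$. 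Hence distinct primary components are mutually orthogonal, $f$ restricts to a non-degenerate alternating pairing on each $\mathcal{G}_p$, and it suffices to prove the statement on each component and take direct sums.

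For a $p$-group I would build hyperbolic pairs inductively. Choose $x$ of maximal order $p^a$; non-degeneracy produces $y$ with $f(x,y)$ a primitive $p^a$-th root of unity, and since $a$ is maximal one checks $y$ may be taken of order $p^a$ with $\langle x\rangle \cap \langle y\rangle = \{e\}$, so that $P = \langle x,y\rangle \cong \mathbb{Z}/p^a \times \mathbb{Z}/p^a$ carries a non-degenerate restriction of $f$. The structural point is then $\mathcal{G} = P \perp P^\perp$ with $P^\perp$ again non-degenerate: non-degeneracy of $f|_P$ forces $P \cap P^\perp = \{e\}$, and an order count gives $PP^\perp = \mathcal{G}$. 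Iterating on $P^\perp$ yields $\mathcal{G} \cong \bigoplus_i (\mathbb{Z}/p^{a_i} \times \mathbb{Z}/p^{a_i})$, with each summand a hyperbolic pair $(x_i, y_i)$. Setting $A = \bigoplus_i \langle x_i\rangle$ and $B = \bigoplus_i \langle y_i\rangle$, both are isotropic with $A \cap B = \{e\}$ and $AB = \mathcal{G}$, so $\mathcal{G} = A \times B$; moreover $f$ restricts to a perfect pairing $A \times B \to \mathbb{C}^\times$, so $b \mapsto f(\,\cdot\,, b)$ identifies $B \cong \widehat{A}$. Reassembling over all primes gives $\mathcal{G} \cong A \times \widehat{A}$.

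For the index statement I would read ``maximal abelian normal subgroup'' as a maximal isotropic subgroup for $f$ (the meaningful condition, matching the maximal abelian subgroups of the associated Heisenberg extension). For any subgroup $C$, non-degeneracy gives an isomorphism $\mathcal{G} \cong \widehat{\mathcal{G}}$ under which $C^\perp$ is the annihilator of $C$; hence $|C|\,|C^\perp| = |\mathcal{G}|$ and $(C^\perp)^\perp = C$. If $C$ is maximal isotropic then $C \subseteq C^\perp$, and a strict inclusion would let any $g \in C^\perp \setminus C$ (using $f(g,g) = 1$ and $g \perp C$) generate a strictly larger isotropic subgroup $\langle C, g\rangle$, a contradiction; thus $C = C^\perp$, so $|C|^2 = |\mathcal{G}| = |A|^2$ and $[\mathcal{G} : C] = |\mathcal{G}|/|C| = |C| = |A|$.

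The main obstacle I anticipate is the hyperbolic-pair step: verifying that a maximal-order element $x$ admits a partner $y$ of the \emph{same} order spanning a split $\mathbb{Z}/p^a \times \mathbb{Z}/p^a$ summand whose orthogonal complement is a genuine direct complement. This is exactly where non-degeneracy and the maximality of $a$ are jointly used, and it is what upgrades the bare order count $|\mathcal{G}| = |A|^2$ to an honest direct-product decomposition $\mathcal{G} = A \times \widehat{A}$.
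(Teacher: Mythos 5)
Your proof is correct, but note that the paper itself gives no argument for this lemma at all: it simply cites Lemma~4.2 of the reference \cite{MR2742735}, so your hyperbolic-pair induction supplies a self-contained proof of exactly the kind the citation hides. The reduction to $p$-primary components, the extraction of a pair $(x,y)$ of maximal order $p^a$ with $f(x,y)$ a primitive $p^a$-th root of unity, the splitting $\mathcal{G}=P\times P^{\perp}$ (where the order count $|P||P^{\perp}|=|\mathcal{G}|$ uses that restriction $\widehat{\mathcal{G}}\to\widehat{P}$ is surjective by divisibility of $\mathbb{C}^{\times}$ --- worth saying explicitly), and the identification $B\cong\widehat{A}$ via the perfect pairing are all sound, as is the self-duality argument $C=C^{\perp}$ giving $[\mathcal{G}:C]=|C|=|A|$ for maximal isotropic $C$. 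Two of your interpretive choices deserve emphasis because the lemma is false without them. First, you quietly strengthen ``anti-symmetric'' to \emph{alternating} by folding $f(x,x)=1$ into the definition; anti-symmetry alone only gives $f(x,x)^{2}=1$, and the bicharacter $f(x,x)=-1$ on $\mathbb{Z}/2$ is anti-symmetric and non-degenerate while $\mathbb{Z}/2$ is certainly not of the form $A\times\widehat{A}$. The alternating property does hold in the paper's application, since there $f(xZ,yZ)=\chi[x,y]$ and $[x,x]=e$; the same remark applies to your treating the ``cocycle'' as a bimultiplicative pairing, which is what $\chi[x,y]$ actually is in a two-step nilpotent group. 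Second, your reading of ``maximal abelian normal subgroup of $\mathcal{G}$'' as \emph{maximal isotropic subgroup for $f$} is the right one --- taken literally the phrase is vacuous for abelian $\mathcal{G}$ --- and it matches precisely how the lemma is invoked in the proof of Theorem~\ref{two-step-general-result}, where $H_{\rho}$ is chosen maximal subject to $f(xZ(G_{\rho}),yZ(G_{\rho}))=1$ on $H_{\rho}\times H_{\rho}$ and the conclusion $|G_{\rho}/H_{\rho}|=p_{1}\cdots p_{k}=|A|$ is read off from this lemma.
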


For a proof see \cite[Lemma~4.2]{MR2742735}.

\bigskip
Before we state the next lemma we introduce some notation. Let
$\mathbb Z_p$ be the cyclic group of order $p.$ By $\mathbb Z_p^2$
we denote the direct product $\mathbb Z_p \times \mathbb Z_p.$
Also recall that the character group of $\mathbb Z_p$ is
isomorphic to itself.

\begin{lemma}
Under the assumptions in Theorem \ref{two-step-general-result}
$$G_\rho / Z(G_\rho) \cong \mathbb Z_{p_1}^2 \times \mathbb Z_{p_2}^2 \times
\ldots \times \mathbb Z_{p_k}^2.$$
\end{lemma}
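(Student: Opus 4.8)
The plan is to endow $\mathcal{G} := G_\rho/Z(G_\rho)$ with a non-degenerate anti-symmetric bilinear form valued in $\mathbb{C}^{\times}$ and then apply Lemma~\ref{structure}. First I would record the structure of $G_\rho$. Since $\rho$ is faithful on $G_\rho$ and $G_\rho$ is two step nilpotent, the commutator subgroup $[G_\rho, G_\rho]$ lies in $Z(G_\rho)$, so by Schur's lemma it acts on $V$ by scalars; call the resulting character $\chi$. By Theorem~\ref{general-case} the character $\chi$ has finite order, and faithfulness of $\rho$ forces $\chi$ to be \emph{injective} on $[G_\rho, G_\rho]$. Hence $[G_\rho, G_\rho] \cong \mathrm{im}(\chi)$ is finite cyclic, and Lemma~\ref{finite} then gives that $\mathcal{G}$ is a finite abelian group (abelian because $[G_\rho,G_\rho]\subseteq Z(G_\rho)$).

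Next I would define $f : \mathcal{G} \times \mathcal{G} \to \mathbb{C}^{\times}$ by $f(\bar x, \bar y) = \chi([x,y])$. I would check that it is well defined, since $[x,y]$ depends only on the cosets $\bar x,\bar y$ modulo $Z(G_\rho)$; that it is bimultiplicative, since in a two step nilpotent group the identities $[xy,z]=[x,z][y,z]$ and $[x,yz]=[x,y][x,z]$ hold (the commutators being central) and $\chi$ is a character; and that it is anti-symmetric, since $[x,y]=[y,x]^{-1}$. For a finite abelian group such an alternating bimultiplicative form is exactly the datum of an anti-symmetric class in $H^2(\mathcal{G},\mathbb{C}^{\times})$, so $f$ is of the type treated in Lemma~\ref{structure}.

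The crucial point is non-degeneracy. If $f(\bar x,\bar y)=1$ for all $\bar y$, then $[x,y]\in\ker\chi$ for every $y$; but $\chi$ is injective on $[G_\rho,G_\rho]$, so $[x,y]=1$ for all $y$, i.e. $x\in Z(G_\rho)$ and $\bar x$ is trivial. Thus the radical of $f$ is trivial and $f$ is non-degenerate on $\mathcal{G}$. Applying Lemma~\ref{structure} yields $\mathcal{G}=A\times\widehat{A}$ for a finite abelian group $A$; writing $A$ in its primary decomposition $A\cong\prod_i \mathbb{Z}_{p_i}$ with each $\mathbb{Z}_{p_i}$ cyclic of prime-power order, and using $\widehat{A}\cong A$, gives $\mathcal{G}\cong\prod_i \mathbb{Z}_{p_i}^2$, as claimed.

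The main obstacle I anticipate is purely bookkeeping: justifying that the commutator pairing is well defined, bimultiplicative and anti-symmetric (all resting on $[G_\rho,G_\rho]$ being central, which is precisely two step nilpotency) and confirming that it matches the cocycle hypothesis of Lemma~\ref{structure}. Once the form is in place, the structural conclusion is immediate from Lemma~\ref{structure} together with the identification $\widehat{A}\cong A$.
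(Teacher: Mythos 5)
Your construction of the commutator pairing $f(\bar x,\bar y)=\chi([x,y])$, the verification that it is well defined, bimultiplicative, anti-symmetric and non-degenerate (via injectivity of $\chi$ on $[G_\rho,G_\rho]$, which follows from faithfulness of $\rho$ exactly as you say), and the concluding appeal to Lemma~\ref{structure} all coincide with the paper's proof. But there is one genuine gap: you cite Theorem~\ref{general-case} to conclude that $\chi$ has finite order, and that is circular. This lemma is a step in the proof of Theorem~\ref{two-step-general-result}, which is precisely what establishes the dimension formula of Theorem~\ref{general-case}; worse, the implication you are quoting (finite dimensionality of $\rho$ forces $\chi$ to have finite order) is proved in the paper \emph{inside the proof of this very lemma}, so it is not available here. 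Without an independent argument for the finite order of $\chi|_{[G_\rho,G_\rho]}$ you cannot conclude that $[G_\rho,G_\rho]$ is finite cyclic, and then Lemma~\ref{finite} does not apply and $\mathcal{G}$ is not known to be finite, which is needed for Lemma~\ref{structure}.

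The missing step is the one non-formal input of the paper's argument, and it is short: since $\rho$ is finite dimensional and irreducible and $G_\rho$ is nilpotent, $\rho$ is monomial, so each $\rho(g)$ is a monomial matrix; taking for instance $m=(\dim\rho)!$, every $\rho(g)^m$ is diagonal, and diagonal matrices commute, so $I=\rho([a^m,b^m])=\chi([a,b])^{m^2}$ for all $a,b\in G_\rho$, using the two-step identity $[a^m,b^m]=[a,b]^{m^2}$ established in the proof of Lemma~\ref{finite}. Hence $\chi|_{[G_\rho,G_\rho]}$ has exponent dividing $m^2$; being injective there, it identifies $[G_\rho,G_\rho]$ with a finite subgroup of $\mathbb{C}^{\times}$, which is cyclic. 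With this inserted in place of the citation of Theorem~\ref{general-case}, the rest of your proposal goes through and is essentially identical to the paper's proof.
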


\begin{proof}
Let $\chi$ be the character obtained by restricting $\rho$ to the
center $Z(G_\rho).$ Since $\rho$ is faithful, so is $\chi.$ Define
$$f : G_\rho / Z(G_\rho) \times G_\rho /Z(G_\rho) \to
\mathbb C^{\times}$$ by
\begin{equation}\label{cocycle}
f (x Z(G_\rho), y Z(G_\rho)) = \chi [x, y].
\end{equation}

It is easily seen that this is well defined. Since $f(y, x) =
\chi[y, x] = \chi [x, y]^{-1} = f(x, y)^{-1}$ it follows that $f$
is anti-symmetric. If $x$ is not in $Z(G)$ there exists $y$ such
that $xy \neq yx.$ Hence $[x, y] \neq 1,$ and since $\chi$ is
faithful, it follows that $f$ is non-degenerate.  Next, we show
that $[G_\rho, G_\rho]$ is finite cyclic. Since $\rho$ is
monomial, each $\rho(g)$ is a monomial matrix. If $a, b \in
G_\rho,$ $aba^{-1}b^{-1} \in [G_\rho, G_\rho] \subset Z(G_\rho).$
Let $m$ be a large enough integer such that $\rho(g)^m$ is
diagonal for all $g \in G_\rho.$ It follows that $I = \rho[a^m,
b^m] = \chi [a^m, b^m].$ Hence $\chi|_{[G_\rho, G_\rho]}$ has
order less than or equal to $m^2 < \infty.$ Since $\chi|_{[G_\rho,
G_\rho]}$ is faithful, it follows that $[G_\rho, G_\rho]$ is
finite cyclic. Applying Lemma \ref{finite} we have $G_\rho/
Z(G_\rho)$ is finite and is abelian as $G_\rho / Z(G_\rho) \subset
G_\rho / [G_\rho, G_\rho].$ By Lemma \ref{structure} we have
$G_\rho / Z(G_\rho) \cong A \times \widehat{A}$ for some $A$ and
from the structure theorem for abelian groups we have the proof.
\end{proof}

Now we are in a position to complete the proof of Theorem
\ref{two-step-general-result}. Choose $H_\rho$ so that $Z(G_\rho)
\subset H_\rho$ and $H_\rho$ is maximal with respect to the
property
\begin{equation}\label{maximal}
\forall (x, y) \in H_\rho \times H_\rho~~ f(xZ(G_\rho),
yZ(G_\rho)) = 1
\end{equation}
where $f$ is given by \ref{cocycle}.

 For $x, y \in H_\rho,$ $f(x, y) = 1 \implies [H_\rho, H_\rho] \subset Ker \chi = {e}.$ It follows
that $H_\rho$ is abelian and $\chi$ extends as a character of
$H_\rho,$ say $\widetilde{\chi}.$ Moreover, since $Z(G_\rho)
\subset H_\rho$ we have that $H_\rho$ is normal in $G_\rho.$ If
$H$ is any normal abelian group such that $H_\rho \subset H,$ then
$H$ will be equal to $H_\rho$ due to maximality of $H_\rho.$ From
Lemma \ref{structure} we have that $|G_\rho / H_\rho| = p_1 p_2
\ldots p_k.$ Next, we claim that the stabilizer of
$\widetilde{\chi}$ in $G_\rho$ equals $H_\rho.$ Now, if there
exists $g \in G_\rho$ such that $\widetilde{\chi}^g (x) =
\widetilde{\chi}(x)~ \forall x \in H_\rho$ we obtain that
$\widetilde{\chi}([g, x]) = 1$ for all $x \in H_\rho.$ If $g$ is
not in $H_\rho$ this will contradict the maximality. By
Proposition \ref{reduction-to-normalizer} $\ind_{H_\rho}^{G_\rho}
(\widetilde{\chi}) $ is irreducible and by Proposition
\ref{reduction} it is equivalent to $\rho.$ This completes the
proof of Theorem \ref{two-step-general-result}.

\bigskip
Conversely, we have

\begin{thm}
Let $G$ be a finitely generated two step nilpotent group
and $\rho$ an irreducible representation of $G.$  Assume that the
character $\chi$ obtained by restricting $\rho$ to $[G, G]$ has
finite order in $\mathbb C^{\times}.$ Then $\rho$ is finite
dimensional and $dim \rho = \sqrt{|G_\rho / Z(G_\rho)|}$ where
$G_\rho$ is $G/Ker \rho.$
\end{thm}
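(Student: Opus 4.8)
The plan is to reduce to the setting of Lemma~\ref{finite} (finite cyclic commutator subgroup) by quotienting out the kernel of $\chi$, to deduce finite dimensionality from the abelian-by-finite criterion recalled in the introduction, and then to read off the dimension directly from the already established Theorem~\ref{two-step-general-result}.

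First I would record the basic reduction. Since $G$ is two step nilpotent, $[G,G]$ is central in $G$; hence by Schur's lemma (which applies because any irreducible representation of the countable group $G$ is cyclic and therefore of countable dimension) every element of $[G,G]$ acts on the representation space by a scalar, and these scalars are precisely the values of $\chi = \rho|_{[G,G]}$. Let $N$ be the order of $\chi$ and set $K = \ker \chi$. Then $\chi(c)^N = 1$ for all $c \in [G,G]$, so the image $\chi([G,G])$ is a finite subgroup of $\mathbb C^{\times}$, hence finite cyclic, and $K$ is a central (so normal) subgroup of $G$, of finite index in $[G,G]$, with $[G,G]/K \cong \chi([G,G])$ finite cyclic. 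Because $\rho(c)$ is the identity exactly when $\chi(c)=1$, we have $K \subseteq \ker\rho$, so $\rho$ factors through an irreducible representation $\bar\rho$ of the finitely generated two step nilpotent group $\bar G = G/K$, whose commutator subgroup $[\bar G,\bar G] = [G,G]/K$ is finite cyclic.

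Next I would invoke Lemma~\ref{finite} applied to $\bar G$: since $[\bar G,\bar G]$ is finite cyclic, $\bar G/Z(\bar G)$ is finite, so $Z(\bar G)$ is an abelian normal subgroup of finite index, i.e. $\bar G$ is abelian-by-finite. By Hall's theorem (recalled in the introduction), every irreducible representation of a finitely generated abelian-by-finite nilpotent group is finite dimensional; applying this to the irreducible $\bar\rho$ shows that $\bar\rho$, and therefore $\rho$, is finite dimensional. With $\rho$ now known to be finite dimensional and irreducible, the dimension formula $\dim\rho = \sqrt{|G_\rho/Z(G_\rho)|}$, where $G_\rho = G/\ker\rho$, is exactly the content of the already proven Theorem~\ref{two-step-general-result}, completing the argument.

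The genuine content lies entirely in the first two paragraphs; once finite dimensionality is in hand the formula is free. Accordingly, the main obstacle is establishing finite dimensionality, and I expect to handle it not by any direct dimension estimate but by engineering, via the quotient $G/K$, a situation in which the commutator subgroup is finite cyclic, so that Lemma~\ref{finite} together with the abelian-by-finite characterization of Hall can be brought to bear.
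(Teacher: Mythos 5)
Your proof is correct, and its core reduction --- quotient so that the commutator subgroup becomes finite cyclic, then apply Lemma~\ref{finite} --- is the same as the paper's; the difference is in how you finish. The paper passes directly to $G_\rho = G/\ker\rho$, observes that the now-faithful $\chi$ of finite order forces $[G_\rho,G_\rho]$ to be finite cyclic, and then says ``we may proceed as above,'' i.e.\ it re-runs the structural proof of Theorem~\ref{two-step-general-result} (the cocycle $f$, Lemma~\ref{structure}, the maximal isotropic subgroup $H_\rho$, and the induced representation) to obtain finite dimensionality and the formula together. You instead quotient by $\ker\chi$ (contained in $\ker\rho$, so a slightly larger quotient group, with the same effect of making the commutator subgroup finite cyclic), deduce from Lemma~\ref{finite} that this quotient is abelian-by-finite, invoke Hall's theorem to get finite dimensionality outright, and then cite Theorem~\ref{two-step-general-result} as a black box for $\dim\rho = \sqrt{|G_\rho/Z(G_\rho)|}$. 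Your route is arguably tidier at precisely the point the paper glosses over: the proof of Theorem~\ref{two-step-general-result} uses finite dimensionality from the start (monomiality yields the finite order of $\chi$ there, and finite dimensionality is what guarantees a common eigenvector of $\rho|_{H_\rho}$, i.e.\ $V_{H_\rho}(\widetilde{\chi})\neq 0$, so that Proposition~\ref{reduction} applies), so ``proceed as above'' for a possibly infinite dimensional $\rho$ implicitly needs finite dimensionality established first --- which is exactly what your explicit appeal to Hall's theorem supplies. The trade-off is that you import Hall's nontrivial result where the paper's argument is in principle self-contained; what you gain is a shorter and fully explicit deduction, with the only genuinely new content isolated in your first two paragraphs, both of which are sound (Schur's lemma applies in countable dimension, $\ker\chi\subseteq\ker\rho$ is central hence normal, and $[G/\ker\chi,\,G/\ker\chi]=[G,G]/\ker\chi$ is a finite subgroup of $\mathbb C^{\times}$, hence cyclic).
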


\begin{proof}
Since $\rho$ is faithful on $G_\rho$ it produces a faithful
character $\chi$ (we use the same notation) when restricted to
$[G_\rho, G_\rho].$ Since $\chi$ has finite order, it follows that
$[G_\rho, G_\rho]$ is finite cyclic. Now, we may proceed as above.

\end{proof}


Now to prove that order of $\chi$ devides dimensiona of $\rho$,
It suffices to consider $G_\rho = G/ Ker \rho.$ Using the previous
notation, we show that $\chi^{p_1 p_2 \ldots p_k} = 1.$ Note that,
if $x \in G_\rho,$ then $x^{p_1 p_2 \ldots p_k} \in Z(G_\rho).$
Hence, if $x, y \in G_\rho,$ $$ \chi^{p_1 p_2 \ldots p_k} [x, y] =
\chi [x, y]^{p_1 p_2 \ldots p_k} = \chi[x^{p_1 p_2 \ldots p_k}, y]
= 1.$$


{\bf Acknowledgment.} This work is supported in part by UGC Centre
for Advanced Studies. The second named author thanks Talia
Fern\'os for useful discussion in the context of this work.

\bibliography{h}{}
\bibliographystyle{siam}

\end{document}